\def\F{\mathcal{F}}
\def\i{\text{i}}
\def\wh{\widehat}
\def\wt{\widetilde}
\def\xial{\xi^1_{\al}}
\def\tq{\overline{q}}
\newcommand\asth{\,\underset{H}{\ast}\,}
\newcommand\astu{\,\underset{1}{\ast}\,}
\newcommand\A{\mathbf{A}}
\newcommand\B{\mathbf{B}}
\newcommand\ubdry{u_{{\scriptstyle bdry}}}
\newcommand\uint{u_{{\scriptstyle int}}}
\newcommand\bm{\mathscr{BM}}
\newcommand\ual{ u_\al }
\newcommand\qual{ q_\al }
\newcommand\ualbdry{u_{\al,bdry}}
\newcommand\ualint{u_{\al,int}}
\DeclareMathOperator\supp{supp}
\def\alto{\xrightarrow{\al\to0}}
\def\weakalto{\xrightharpoonup{\al\to0}}
\patchcmd{\section}{\scshape}{\bfseries\Large}{}{}
\renewcommand{\@secnumfont}{\bfseries\Large}
\newtheorem{theorem}{Theorem}
\newtheorem{lemma}[theorem]{Lemma}
\newtheorem{definition}[theorem]{Definition}
\newtheorem{proposition}[theorem]{Proposition}
\theoremstyle{remark}
\newtheorem{remark}[theorem]{\bf Remark}
\newtheorem{claim}[theorem]{\bf Claim}
\newcommand{\R}{\mathbb{R}}
\newcommand{\real}{\mathbb{R}}
\newcommand{\I}{\mathbb{I}}
\newcommand{\hp}{\mathbb{H}}
\def\al{\alpha}
\def\PP{\mathbb{P}}
\DeclareMathOperator{\dive}{div}
\DeclareMathOperator{\curl}{curl}
\def\ep{\varepsilon}
\def\om{\omega}
\def\dt{\partial_{t}}
\newcommand\nl[2]{\|#2\|_{L^{#1}}}
\begin{document}
\title{The limit $\alpha \to 0$ of the $\alpha$-Euler equations in the half plane with no-slip boundary conditions and vortex sheet initial data}
\author{A. V. Busuioc, D. Iftimie, M. C. Lopes Filho and H. J. Nussenzveig Lopes}

\begin{abstract}
In this article we study the limit when $\alpha \to 0$ of solutions to the $\alpha$-Euler system in the half-plane, with no-slip boundary conditions, to weak solutions of the 2D incompressible Euler equations with non-negative initial vorticity in the space of bounded Radon measures in $H^{-1}$. This result extends the analysis done in \cite{busuioc_weak_2017,lopes_filho_convergence_2015}. It requires a substantially distinct approach, analogous to that used for Delort's Theorem, and a new detailed investigation of the relation between (no-slip) filtered velocity and potential vorticity in the half-plane.
\end{abstract}

\maketitle

\section{Introduction}

This article concerns the limit $\alpha \to 0$ of the $\alpha$-Euler equations in the half-plane, with no-slip boundary conditions, with initial velocity in $L^2$ and initial vorticity whose singular part is a non-negative bounded Radon measure. The present work is a natural continuation of research contained in \cite{busuioc_weak_2017,lopes_filho_convergence_2015}, where the respective authors proved convergence, first for initial velocity in $H^3$, see \cite{lopes_filho_convergence_2015} and then for initial vorticity in $L^p$, $p>1$, see \cite{busuioc_weak_2017}, both for flows in bounded, smooth domains. The extension to initial vorticities in the space of Radon measures requires a substantial change in technique. The previous results are based on energy estimates and boundary correctors \cite{lopes_filho_convergence_2015} or on the compactness of the velocity sequences obtained from boundedness of the corresponding vorticity in a suitable space  \cite{busuioc_weak_2017}. For the present work, a compensated compactness argument is required, involving a subtle cancellation property of the nonlinearity, in the spirit of Delort's celebrated existence result, see \cite{delort_existence_1991-1}. Let us mention that the limit $\alpha \to 0$ for initial vorticities in the space of Radon measures, in the case of the full plane, was first considered in \cite{bardos_global_2010} and the proof was completed in \cite{gotoda_convergence_2018}. However, the presence of boundaries is a significant complication. Our work involves a detailed study of the influence of the boundary on the solution of the $\alpha$-Euler equations, the key novelty of our result. 

More precisely, much of our analysis focuses on the fine properties of the operator $\B$, introduced in Definition \ref{soloperator}, which maps the potential vorticity $q$ to the filtered velocity $u$. This is a classical pseudo-differential operator of order $-3$, given by $\B = (\mathbb{I} + \alpha {\bf A})^{-1} K_{\mathbb{H}}$, where ${\bf A}$ is the half-plane Stokes operator with no-slip boundary conditions and $K_{\mathbb{H}}$ is the Biot-Savart operator for the half-plane. It decomposes naturally into an interior part, which is easy to understand, and a boundary part, similar to a Poisson integral, which is more delicate. The analysis of the boundary part makes use of Fourier methods, one of the main reasons why we restrict ourselves to half-plane flows.

From a broader point-of-view, the $\alpha$-Euler equations are a regularization of the Euler equations, obtained by averaging the transporting velocity at scale $\sqrt{\alpha}$. It is the inviscid limit of the second-grade fluid model, see \cite{cioranescu_existence_1984-1}, the equation for geodesics in the group of volume-preserving diffeomorphisms with a natural metric, see \cite{marsden_geometry_2000} and a variant of the vortex blob method, a standard numerical method for discretizing 2D inviscid flows. The desingularized velocity is obtained from the physical one by inverting the elliptic operator $(\mathbb{I} - \alpha \PP \Delta)$, which, in a domain with boundary, requires boundary conditions. The no-slip boundary conditions are the most natural, but Navier-type conditions have also been used (see \cite{busuioc_incompressible_2012,busuioc_uniform_2016}). Choosing no-slip makes the vanishing $\alpha$ problem resemble the vanishing viscosity limit, an important open problem. In this setting, the vanishing $\alpha$ limit could present some of the complications of the vanishing viscosity limit, such as boundary layers and spontaneous small-scale generation, see \cite{bardos_mathematics_2013-1}. This similarity between the present problem and vanishing viscosity is the chief motivation for the present work. The results obtained to date, including those we present here, suggest that these two limits behave in sharply distinct ways, but it is not entirely clear why that might be the case.

The remainder of this article is organized as follows. Still in the Introduction, we briefly state our main results. In Section \ref{sect-notandprelim} we fix notation, we introduce elementary facts of Potential Theory in the half-plane and compute some Fourier transforms. In Section \ref{sect-green} we introduce the operator $\B$, which maps potential vorticity to filtered velocity. In Section \ref{sect-existence} we sketch the proof of Theorem \ref{theo-existence}, the existence result for $\alpha$ fixed. This is an adaptation to the case of the half-plane of a similar result in the full-plane case, see \cite{oliver_vortex_2001}. In Section \ref{sect-intbdry} we introduce the decomposition of the operator $\B$ in interior and boundary parts. In Section \ref{sect-l1est} we derive precise estimates for the boundary potentials associated with the operator $\B$. In Section \ref{sect-passlim} we apply the results obtained to prove Theorem \ref{theo-convergence}, adapting Schochet's argument, see \cite{schochet_weak_1995}, and the argument used in \cite{lopes_filho_existence_2001}. Finally, in the last section, we present some concluding remarks and a few open problems.

Let us continue with some notation. We will denote by $\hp$ the half-plane
\begin{equation*}
  \hp=\{x\in\R^2\ ;\ x_2>0\}.
\end{equation*}

The initial-value problem for the $\alpha$-Euler equations with no-slip boundary conditions on $\hp$ are given by:
\begin{equation*} 
\left\{\begin{array}{ll}
\partial_t (u - \alpha \Delta u) + u\cdot\nabla (u - \alpha \Delta u) + \sum_j (u - \alpha \Delta u)_j\nabla u_j =-\nabla p, & \text{ in } \real_+ \times \hp\\
& \\
\dive u = 0, &   \text{ in } \real_+ \times \hp,\\
& \\
u = 0, & \text{ on } \real_+ \times \partial \hp,\\
& \\
u(0,\cdot) = u_0, & \text{ on } \{t=0\} \times \hp.
\end{array}\right.
\end{equation*}
Above, $u - \alpha \Delta u$ is called the {\it unfiltered} velocity, $u$ is the {\it filtered} velocity and $p$ is the scalar pressure.

Taking the curl of the $\alpha$-Euler equations, in two dimensions, gives rise to an active scalar transport equation  given by
\begin{equation} \label{potvorteq}
\left\{
\begin{array}{l}
\partial_t q+u\cdot\nabla q=0,\\
q(0,\cdot)=q_0,
\end{array}
\right.
\end{equation}
where $u$ is related to the  potential vorticity $q$ through the following system:
\begin{equation} \label{velpotvort}
\left\{
\begin{array}{l}
\dive u=0,\\
\curl(u-\al\Delta u)=q\\
u\bigl|_{\partial \hp }=0.
\end{array}
\right.
\end{equation}
The scalar quantity $q \equiv \curl(\I - \alpha \Delta) u$ is called the {\it potential vorticity} associated to the velocity $u$. The equations in \eqref{potvorteq}-\eqref{velpotvort} are the potential vorticity equations, i.e. the vorticity formulation of the $\alpha$-Euler equations.

Let $\bm(\hp)$ be the set of bounded Radon measures on $\hp$ and recall that the norm of a measure $\mu$ in  $\bm(\hp)$ is given by the total variation $|\mu|(\hp)$. Set $\dot{H}^{-1}(\hp)=\{\curl w \; | \; w\in L^2(\hp)^2\}$, which we note in passing is a proper subset of $H^{-1}(\hp)$. We have that $\dot{H}^{-1}(\hp)$ is a Banach space with the norm
$\| q \|_{\dot H^{-1}} = \inf\{\|w\|_{L^2} \; | \; q = \curl w \}$. Let $\PP$ denote the Leray projector in $L^2(\hp)$ onto divergence free vector fields which are tangent to the boundary of $\hp$. Note that $\|q\|_{\dot{H}^{-1}} \equiv \| \PP w \|_{L^2(\hp)}$, independently of $w \in L^2$ such that $\curl w = q$.

We will now state our main results.

\begin{theorem}[Existence]\label{theo-existence}
Assume that $q_0\in\bm(\hp)\cap \dot{H}^{-1}(\hp)$. Then there exists a global solution $u\in C^0_b(\R_+;H^1_w(\hp))\cap C^0(\R_+\times \hp)$, $q\in L^\infty(\R_+;\bm(\hp))$ of the $\al$-Euler equations with initial data $q_0$. In addition we have the energy inequality
\begin{equation}\label{ubound}
\nl2{u(t)}^2+\al\nl2{\nabla u(t)}^2\leq \nl2{u_0}^2+\al\nl2{\nabla u_0}^2\quad \forall t\geq0.
\end{equation}
and the bound
\begin{equation}\label{qbound}
\|q\|_{L^\infty(\R_+;\bm(\hp))}\leq \|q_0\|_{\bm(\hp)}.
\end{equation}
\end{theorem}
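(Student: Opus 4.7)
The plan is to adapt the full-plane argument of \cite{oliver_vortex_2001}, with the operator $\B = (\I + \alpha\A)^{-1} K_\hp$ playing the role of the composition of Biot-Savart and the inverse modified Helmholtz operator on $\R^2$. First, I would regularize the initial datum, choosing $q_0^\varepsilon \in C_c^\infty(\hp)$ such that $q_0^\varepsilon \rightharpoonup^* q_0$ in $\bm(\hp)$, $q_0^\varepsilon \to q_0$ strongly in $\dot H^{-1}(\hp)$, and $\|q_0^\varepsilon\|_{\bm} \leq \|q_0\|_{\bm} + \varepsilon$; the regularized initial velocities $u_0^\varepsilon := \B q_0^\varepsilon$ then converge to $u_0 = \B q_0$ in $H^1(\hp)$.

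For each fixed $\varepsilon > 0$, I would construct a global smooth solution $(u^\varepsilon, q^\varepsilon)$ of \eqref{potvorteq}-\eqref{velpotvort} by a standard fixed-point argument: transport $q^\varepsilon$ along the flow of a given velocity, then recover the velocity as $\B q^\varepsilon$. Two uniform a priori bounds drive the rest of the proof. Pairing the momentum equation with $u^\varepsilon$ and using $\dive u^\varepsilon = 0$ together with $u^\varepsilon|_{\partial\hp}=0$, the two cross terms in the nonlinearity assemble into $u^\varepsilon\cdot\nabla\bigl(u^\varepsilon\cdot(u^\varepsilon - \alpha\Delta u^\varepsilon)\bigr)$, which integrates to zero; this yields conservation of $\|u^\varepsilon\|_{L^2}^2 + \alpha\|\nabla u^\varepsilon\|_{L^2}^2$. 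Since $q^\varepsilon$ satisfies a transport equation driven by a smooth divergence-free velocity tangent to $\partial\hp$, its total variation is preserved along the flow.

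To pass to the limit $\varepsilon \to 0$, I would use the order $-3$ smoothing of $\B$ applied to the uniform $L^\infty_t(\bm)$ bound on $q^\varepsilon$ to obtain uniform bounds on $u^\varepsilon$ in $L^\infty_t W^{2,p}_{x,loc}$ for all $p<2$, hence in $L^\infty_t C^{0,\theta}_{x,loc}$ for some $\theta>0$. Using the transport equation to control $\partial_t q^\varepsilon$, and hence $\partial_t u^\varepsilon = \B \partial_t q^\varepsilon$, in a negative Sobolev norm, an Arzel\`a-Ascoli argument in $(t,x)$ extracts a subsequence with $u^\varepsilon \to u$ locally uniformly on $\R_+\times\overline{\hp}$, $q^\varepsilon \rightharpoonup^* q$ in $L^\infty(\R_+;\bm(\hp))$, and $u^\varepsilon \rightharpoonup u$ in $L^\infty(\R_+;H^1(\hp))$. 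In the weak form $\int \partial_t\phi\, q + \int (u\cdot\nabla\phi)\,q = 0$ tested against $\phi \in C_c^\infty(\R_+\times\hp)$, the locally uniform convergence of $u^\varepsilon$ combined with the weak-$*$ convergence of $q^\varepsilon$ suffices to pass to the limit in the nonlinear term. The bounds \eqref{ubound}-\eqref{qbound} follow from lower semicontinuity, and the temporal weak continuity and joint spatial continuity of $u$ come from the equation together with the smoothing of $\B$.

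The main obstacle is the compactness of $u^\varepsilon$: one must extract genuine pointwise regularity of the velocity out of a merely measure-valued vorticity. This hinges on the quantitative order $-3$ smoothing of $\B$ in the half-plane which, in the absence of full translation invariance, requires the detailed interior/boundary decomposition and boundary-potential bounds developed in Sections \ref{sect-intbdry}-\ref{sect-l1est}. Once these mapping properties are in hand, the adaptation of \cite{oliver_vortex_2001} should proceed without further essential changes.
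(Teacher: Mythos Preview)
Your overall strategy matches the paper's: regularize $q_0$, solve for smooth data, use the conserved energy and total variation, exploit the $W^{2,p}_{loc}$ smoothing of $\B$ on $\bm(\hp)$ to upgrade weak convergence of $u^\varepsilon$ to local uniform convergence, and pass to the limit in the weak vorticity form. Two points deserve correction.

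First, your identification of the ``main obstacle'' is misplaced. The $W^{2,p}_{loc}$ smoothing of $\B$ on $\bm(\hp)$ does \emph{not} require the interior/boundary decomposition of Sections~\ref{sect-intbdry}--\ref{sect-l1est}. In the paper it follows from a short soft argument (Claim~\ref{extraregul} inside the proof): $K_\hp$ maps $\bm(\hp)$ into $L^{2,\infty}_\sigma$ by Lemma~\ref{lem-kh}, and $(\I+\alpha\A)^{-1}$ maps $L^{2,\infty}_\sigma$ into $W^{2,p}_{loc}$ for $p<2$ by real interpolation of the $L^r$ Stokes estimates of \cite{borchers_$l^2$_1988} together with the embedding $L^{2,\infty}\hookrightarrow L^p_{loc}$. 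The explicit boundary-potential analysis of Sections~\ref{sect-intbdry}--\ref{sect-l1est} is used only in Theorem~\ref{theo-convergence}, where one needs the quantitative decay $|\ubdry|\lesssim \alpha^{1/4}x_2^{-3/2}\|q\|_{\bm}$ to kill the boundary contribution as $\alpha\to 0$; it plays no role in the existence proof.

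Second, the paper obtains time-compactness by a route that avoids your step $\partial_t u^\varepsilon=\B\,\partial_t q^\varepsilon$. Writing $\partial_t q^\varepsilon=-\dive(u^\varepsilon q^\varepsilon)$ and then applying $\B$ would force you to extend $\B$ beyond $\bm\cap\dot H^{-1}$, and the only uniform control on $u^\varepsilon q^\varepsilon$ is local (since $q^\varepsilon$ is merely bounded in $L^1$), which is awkward to feed into a global operator. Instead, the paper bounds $\partial_t u^n$ directly in $L^\infty_t L^p$ for $p\in(1,2)$: one tests the \emph{velocity} formulation against $\psi=(\I+\alpha\A)^{-1}\varphi$ with $\varphi\in L^{p'}_\sigma$, so that $\langle\partial_t(u^n+\alpha\A u^n),\psi\rangle=\langle\partial_t u^n,\varphi\rangle$, and then estimates the four nonlinear terms using only $\|u^n\|_{H^1}$ and $\|\psi\|_{W^{2,p'}}\lesssim\|\varphi\|_{L^{p'}}$. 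This yields $u^n\to u$ in $C^0(\R_+;W^{-1,p}_{loc})$, which, interpolated against the $L^\infty_t W^{2,p}_{loc}$ bound, gives $u^n\to u$ in $C^0(\R_+\times\hp)$.
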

Above $C^0_b$ is the space of bounded continuous functions and $H^1_w$ denotes the space $H^1$ endowed with the weak topology.

\begin{theorem}[Convergence]\label{theo-convergence}
Assume that $q_0\in\bigl(\bm_+(\hp)+L^1(\hp)\bigr)\cap \dot{H}^{-1}(\hp)$ is independent of $\al$. Let $u_\al$, $q_\al$ be a global solution of the $\al$--Euler equations with initial data $q_0$, as obtained in Theorem \ref{theo-existence}. Then there exists a vortex sheet solution $v\in L^\infty(\R_+;L^2(\hp))$, with $\om=\curl v\in L^\infty(\R_+;\bm(\hp))$, of the incompressible Euler equations with initial vorticity $\om_0=q_0$, and a subsequence $u_{\al_k}$, $q_{\al_k}$ such that $u_{\al_k}\rightharpoonup v$  weak-$\ast$ $L^\infty(\R_+;L^2)$  and $q_{\al_k}\rightharpoonup \om$  weak-$\ast$ $L^\infty(\R_+;\bm(\hp))$.
\end{theorem}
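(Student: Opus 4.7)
The plan is to adapt the Delort--Schochet concentration-cancellation scheme to the half-plane with no-slip boundary, exploiting the fine analysis of the operator $\B$ developed in Sections \ref{sect-green}--\ref{sect-l1est}.

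I would begin by extracting weak limits. The bounds \eqref{ubound} and \eqref{qbound} give uniform-in-$\al$ control of $\ual$ in $L^\infty(\R_+;L^2(\hp))$ and of $\qual$ in $L^\infty(\R_+;\bm(\hp))$, so Banach--Alaoglu yields the subsequence $(u_{\al_k},q_{\al_k})$ with weak-$\ast$ limits $(v,\om)$. Since $\qual=\curl\ual-\al\Delta\curl\ual$ and $\al\Delta\curl\ual\to 0$ distributionally, one concludes $\om=\curl v$, as required. To promote this to the compactness needed to handle terms linear in $\ual$, I would use the equation together with the relation $\ual=\B\qual$ to bound $\dt\ual$ in $L^\infty(\R_+;H^{-N}_{\mathrm{loc}}(\hp))$ for some large $N$, and apply Aubin--Lions to obtain $\ual\to v$ strongly in $C^0_{\mathrm{loc}}(\R_+;H^{-s}_{\mathrm{loc}}(\hp))$ for some $s>0$.

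The crux is the nonlinear limit. Testing the momentum form of the $\al$-Euler equation against a divergence-free $\varphi\in C^\infty_c([0,\infty)\times\hp)$, the quadratic term becomes
\begin{equation*}
\int_0^\infty\!\!\int_\hp(\ual\otimes\ual):\nabla\varphi\,dx\,dt.
\end{equation*}
Writing $\ual=\B\qual=\ualint+\ualbdry$ via Section \ref{sect-intbdry}, this unfolds into a sum of double integrals against $\qual(t,x)\qual(t,y)$, each with an explicit kernel built from $\nabla\varphi$ and the interior/boundary parts of $\B$. For the interior--interior piece the kernel is, up to the mild $(\I+\al\A)^{-1}$ smoothing, a variant of the method-of-images Biot--Savart kernel, and after symmetrization in $(x,y)$ it becomes bounded and continuous off the diagonal; combined with the preserved sign of the singular part of $\qual$ (a consequence of the transport equation \eqref{potvorteq} and the hypothesis $q_0\in\bm_+(\hp)+L^1(\hp)$), this rules out pathological diagonal concentrations and yields the Delort--Schochet cancellation, in the spirit of \cite{schochet_weak_1995,lopes_filho_existence_2001}. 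The mixed and boundary--boundary integrals are, by contrast, controlled by the sharp $L^1$ bounds of Section \ref{sect-l1est} together with the mass bound \eqref{qbound} and the strong compactness above, and are shown to vanish in the limit.

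The main obstacle is precisely making the symmetrization work in the half-plane: $K_\hp$ is only translation invariant modulo reflection, and the Stokes smoothing $(\I+\al\A)^{-1}$ breaks even that structure. The decomposition $\B=\B_\text{int}+\B_\text{bdry}$ from Section \ref{sect-intbdry} is tailored precisely for this obstruction: the interior part retains just enough symmetry for the classical Delort argument to apply, while the Poisson-type boundary part, although not small as $\al\to 0$, is controlled uniformly by the $L^1$ estimates of Section \ref{sect-l1est}. Once the nonlinear passage is established, $v$ is identified as a vortex-sheet solution of the 2D Euler equation on $\hp$ with initial vorticity $\om_0=q_0$ by passing to the limit in the weak formulation, the initial condition being recovered from the temporal continuity inherited from the Aubin--Lions step.
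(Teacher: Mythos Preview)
Your overall architecture is right, but two points are off.

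First, the paper works in the weak \emph{vorticity} formulation, not the momentum formulation. One tests \eqref{potvorteq} against a scalar $\varphi\in C^\infty_c([0,T)\times\hp)$, so the nonlinear term is $\int\ual\cdot\nabla\varphi\,d\qual$, bilinear in $(\ual,\qual)$ rather than quadratic in $\ual$. After the split $\ual=\ualint+\ualbdry$ there are only two pieces, not four: the boundary piece $\int\ualbdry\cdot\nabla\varphi\,d\qual$, and the interior piece, which --- since $\ualint=K_\al\ast\tq_\al$ --- becomes exactly the symmetrized Schochet double integral $\iint H_\varphi^\al\,d\qual\,d\qual$ with kernel \eqref{defsal}, to which Lemma~\ref{lem-S} applies directly. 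Your momentum-form kernel $\int K_\al(\cdot,x)\otimes K_\al(\cdot,y):\nabla\varphi$ is a different object whose off-diagonal boundedness is not the standard Schochet lemma and would need separate work. The time-compactness is likewise carried by $\qual$: one bounds $\dt\qual$ in $L^\infty(0,T;H^{-4}_{\mathrm{loc}})$ from the vorticity identity \eqref{weakalE} itself and uses Ascoli to obtain $\qual(t)\rightharpoonup\om(t)$ weak-$\ast$ in $\bm(\hp)$ for every $t$, which is what feeds the convergence $\qual\otimes\qual\to\om\otimes\om$ in the off-diagonal part of the nonlinear term.

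Second, you have misread Section~\ref{sect-l1est}. Proposition~\ref{prop-ubdry} gives $|\ualbdry(x)|\leq C\|\qual\|_{\bm}\,\al^{1/4}x_2^{-3/2}$: on $\supp\varphi\subset\hp^\ep$ the boundary corrector is $O(\al^{1/4})$, so it \emph{is} small as $\al\to0$, contrary to your claim that it is ``not small'' but merely uniformly controlled. This decay is precisely what makes the boundary contribution vanish in \eqref{ualbdrylim}; the $L^1$ bound of Proposition~\ref{propg} is only on the trace $g$ and serves as input to Proposition~\ref{prop-ubdry}, not as an endpoint estimate. Finally, the sign hypothesis does not enter by ``preserving the sign of the singular part of $\qual$'' in any direct way; it enters through Claim~\ref{contmes}, which shows that the weak-$\ast$ limit $\mu$ of $|\qual(t)|$ is non-atomic. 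The argument uses the DiPerna--Lions representation $\qual=q_0\circ X(t)$ and the decomposition $q_0=q_1+q_2$ with $q_1\geq0$, $q_2\in L^1$, so that $|\qual|\leq\qual^1+|\qual^2|$ with $\qual^2$ equi-integrable; non-atomicity of $\om$ itself comes for free from $\om=\curl v\in H^{-1}$.
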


\section{Notations and some preliminary results} \label{sect-notandprelim}

We begin by fixing notation. The constant $C$ denotes a generic constant whose value may change from one line to another. If $(a,b) \in \real^2$ then we denote  $(a,b)^\perp \equiv (-b,a)$.

We will use standard notation for function spaces: $L^p$ (Lebesgue space), $W^{m,p}$ (Sobolev space), $L^{2,\infty}$ (Lorentz space), $\bm$ (bounded Radon measures), etc. All function spaces are defined on $\hp$ unless otherwise specified. The notation $L^p_\sigma$ denotes the space of $L^p$ divergence free vector fields tangent to the boundary endowed with the $L^p$ norm. We define in a similar manner $L^{2,\infty}_\sigma$.

Recall the Leray projector $\PP$, \textit{i.e.} the $L^2$ orthogonal projector from $L^2$ to $L^2_\sigma$. It is well-known that $\PP$ can be extended to a bounded operator from $L^p$ to $L^p_\sigma$ for all $1<p<\infty$. The Stokes operator $\A$ is defined as $\A=-\PP\Delta$. Various regularity properties for the Stokes operator in $L^p$ spaces on a half-plane were proved in \cite{borchers_$l^2$_1988}. We note, in particular, that, for any $\al > 0$, $\mathbb{I}+\al\A$ is invertible on $L^p_\sigma(\hp)$ with values in $W^{2,p}(\hp)\cap W^{1,p}_0(\hp)$, see Section 3 of \cite{borchers_$l^2$_1988}. We denote this
inverse by $(\I + \al \A)^{-1}$.

Let us start with a very simple $H^1$ estimate.

\begin{lemma}\label{lem-stokes}
Let $f\in L^2$. Then $u=(\I+\al\A)^{-1}\PP f\in H^2\cap H^1_0$ and we have the following estimate:
\begin{equation*}
\nl2{u}^2+\al\nl2{\nabla u}^2\leq \nl2{f}^2.
\end{equation*}
In particular, the operator $(\I + \al \A)^{-1}\PP$ is continuous from $L^2$ to $H^1_0$.
\end{lemma}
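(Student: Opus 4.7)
The plan is to use the fact, cited in the paper from Borchers--Miyakawa, that $(\I+\al\A)^{-1}$ maps $L^2_\sigma$ boundedly into $W^{2,2}\cap W^{1,2}_0$. Since $\PP f\in L^2_\sigma$, this immediately gives $u=(\I+\al\A)^{-1}\PP f\in H^2\cap H^1_0$, and moreover $u\in L^2_\sigma$, so $\PP u=u$. The function $u$ then satisfies the identity
\begin{equation*}
u+\al\A u=\PP f
\end{equation*}
in $L^2_\sigma$. This is where all the structural information lives, so the rest is bookkeeping.

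To get the energy estimate, I would take the $L^2$ inner product of the equation with $u$. The term $(\PP f,u)$ collapses to $(f,\PP u)=(f,u)$ because $u$ is already in $L^2_\sigma$. For the Stokes term I would write $\A u=-\PP\Delta u$ and move $\PP$ onto $u$ to get $(\A u,u)=-(\Delta u,u)$; since $u\in H^1_0$, an integration by parts with no boundary contribution yields $(\A u,u)=\|\nabla u\|_{L^2}^2$. Combining these gives
\begin{equation*}
\|u\|_{L^2}^2+\al\|\nabla u\|_{L^2}^2=(f,u)\leq \|f\|_{L^2}\|u\|_{L^2}\leq \tfrac12\|f\|_{L^2}^2+\tfrac12\|u\|_{L^2}^2,
\end{equation*}
from which one reads off $\|u\|_{L^2}^2+2\al\|\nabla u\|_{L^2}^2\leq\|f\|_{L^2}^2$, which is even slightly stronger than the stated bound. (Alternatively, taking the squared norm of both sides of $u+\al\A u=\PP f$ directly gives $\|u\|_{L^2}^2+2\al\|\nabla u\|_{L^2}^2+\al^2\|\A u\|_{L^2}^2\leq\|\PP f\|_{L^2}^2\leq\|f\|_{L^2}^2$, bypassing Young's inequality entirely.)

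The continuity claim from $L^2$ to $H^1_0$ follows since the estimate shows $\|u\|_{H^1}^2\leq (1+\al^{-1})\|f\|_{L^2}^2$, and $u\in H^1_0$ was already established.

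I do not foresee a real obstacle: the only subtlety is ensuring that boundary terms from integration by parts vanish, which is automatic once one knows $u\in H^1_0$, and that $\PP$ can be moved freely, which is immediate because $u\in L^2_\sigma$. Both facts come packaged in the cited Borchers--Miyakawa mapping property of $(\I+\al\A)^{-1}$ on $L^2_\sigma$.
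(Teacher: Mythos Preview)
Your proof is correct and follows essentially the same energy-method approach as the paper: both use the Borchers--Miyakawa regularity to place $u$ in $H^2\cap H^1_0$, then test the resolvent equation against $u$ and integrate by parts. The only cosmetic differences are that the paper writes the equation with an explicit pressure, $u-\al\Delta u+\nabla\pi=f$, rather than keeping the projector, and in the final step bounds $\|u\|_{L^2}\leq(\|u\|_{L^2}^2+\al\|\nabla u\|_{L^2}^2)^{1/2}$ and divides, whereas you use Young's inequality (and thereby gain a harmless factor of $2$ on the gradient term).
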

\begin{proof}
We know from the results of \cite{borchers_$l^2$_1988} that $u\in H^2\cap H^1_0$.
We have that
\begin{equation*}
u-\al\Delta u+\nabla\pi=f
\end{equation*}
for some $\pi$.
We multiply the above relation by $u$ and integrate by parts using that $u$ is divergence free and vanishes at the boundary. We get that
\begin{equation*}
\nl2{u}^2+\al\nl2{\nabla u}^2=\int f\cdot u\leq \nl2{f}\nl2{u}\leq \nl2{f}(\nl2{u}^2+\al\nl2{\nabla u}^2)^{\frac12}
\end{equation*}
so
\begin{equation*}
\nl2{u}^2+\al\nl2{\nabla u}^2\leq \nl2{f}^2.
\end{equation*}
\end{proof}

 The Fourier transform in $\R^2$ is denoted by $\F$:
 \begin{equation*}
   \F(f)(\xi)=\int_{\R^2}e^{-i x\cdot\xi}f(x)\,dx
 \end{equation*}

The Fourier transform in $\R$ is denoted by $\F_\R$ or $\wh g=\F_\R g$ where $g$ is defined on $\R$:
\begin{equation*}
\F_\R g(s)=\widehat g(s)=\int_\R e^{-ist}g(t)\,dt.
\end{equation*}

 For functions of two variables we will use the partial Fourier transform in the first variable and we will denote it by $\F_1$, or also $\F_1 f=\widetilde f$. That is, for functions $f$ defined on $\R^2$ or on $\hp$ we define
 \begin{equation} \label{tildenotation}
    \F_1(f)(\xi_1,x_2)= \widetilde{f}(\xi_1,x_2)
 \equiv \int_{\R}e^{-i x_1\cdot\xi_1}f(x_1,x_2)\,dx_1
 \end{equation}
We define in the same manner $\F_2$ the partial Fourier transform in the second variable.

We denote by $G_\al$ the Green's function of the operator $\mathbb{I}-\al\Delta$ in $\R^2$, \textit{i.e.}
\begin{equation}\label{Gal}
  G_\al(x)=\F^{-1}\bigl(\frac1{1+\al|\xi|^2}\bigr).
\end{equation}
We have that
\begin{equation}\label{galphag1}
  G_\al(x)=\frac1{\al}G_1\bigl(\frac x{\sqrt\al}\bigr)
\end{equation}
where
\begin{equation*}
  G_1(x)=\F^{-1}\bigl(\frac1{1+|\xi|^2}\bigr).
\end{equation*}
is a function who is exponentially decaying at infinity and has a logarithmic singularity at the origin.

The Green's function of the Laplacian in $\R^2$ is denoted by
\begin{equation*}
G(x)=\frac1{2\pi}\ln|x|.
\end{equation*}

We shall also use the following function
\begin{equation}\label{defhalpha}
  H_\al(x)=\al G_\al(x)+G(x)=\al G_\al(x)+\frac1{2\pi}\ln|x|.
\end{equation}

A scalar function $\omega \in L^p(\real^2)$ gives rise to a divergence-free vector field $u$ on $\real^2$ whose $\curl$ is $\omega$ through the Biot-Savart law: $u = K \ast \omega$, with the (Biot-Savart) kernel  $K$, given by:
\begin{equation} \label{defK}
K(x)=\nabla^\perp G(x)=\frac{x^\perp}{2\pi|x|^2}.
\end{equation}

We also need to introduce the following smoothed-out kernel
\begin{equation} \label{defkal}
K_\al=K\ast G_\al.
\end{equation}

\medskip

We recall now several well-known (inverse) Fourier transforms. For all $a>0$ we have that
\begin{equation}
\label{four0}
\F_\R\bigl(\frac1{t^2+a^2}\bigr)=\frac\pi{a}e^{-a|s|},
\end{equation}
\begin{equation}\label{four1}
\F_\R^{-1}\bigl(\frac1{s^2+a^2}\bigr)=\frac1{2a}e^{-a|t|}
\end{equation}
and
\begin{equation}\label{four2}
\F_\R^{-1}(e^{-a|s|})=\frac{a}{\pi(t^2+a^2)}.
\end{equation}
Differentiating with respect to $a$ the relation above also yields the following inverse Fourier transform:
\begin{equation}\label{four4}
\F_\R^{-1}(|s| e^{-a|s|})=\frac{a^2-t^2}{\pi(t^2+a^2)^2}.
\end{equation}

Applying $\F_1$ to \eqref{Gal} and using \eqref{four1} we get that
\begin{equation*}
  \F_1 G_\al(\xi_1,x_2)=\F_2^{-1}\bigl(\frac1{1+\al|\xi|^2}\bigr)(\xi_1,x_2)=\frac1{2\al\xial}e^{-\xial |x_2|}.
\end{equation*}
where we used the notation
\begin{equation*}
  \xial=\sqrt{\xi_1^2+\frac1\al}.
\end{equation*}
Differentiating with respect to $x_2$ yields
\begin{equation}\label{g1}
  \F_1 \partial_2G_\al(\xi_1,x_2)=-\frac1{2\al}e^{-\xial x_2}\qquad\forall x_2>0.
\end{equation}

Using \eqref{four0} we compute
\begin{equation}\label{g2}
\F_1 \partial_2G(\xi_1,x_2)=\frac{x_2}{2\pi}\F_1\bigl(\frac1{x_1^2+x_2^2}\bigr)=\frac12 e^{-x_2|\xi_1|}\qquad\forall x_2>0.
\end{equation}

From \eqref{defhalpha}, \eqref{g1} and \eqref{g2} we conclude that for all $x_2>0$ we have that
\begin{equation}\label{fhalpha}
\F_1 \partial_2H_\al(\xi_1,x_2)=\al\F_1 \partial_2G_\al(\xi_1,x_2)+
\F_1 \partial_2G(\xi_1,x_2)
=\frac12 (e^{-x_2|\xi_1|}-e^{-x_2 \xial }).
\end{equation}

In the remainder of this work we will frequently need to consider the odd extension to $\real^2$ of a scalar defined on $\hp$, as well as the Biot-Savart law induced by the extension. To this end we introduce the following notation: if $q \in C^\infty(\hp)$ then its odd extension will be denoted $\tq=\tq(x)$ and is given by

\begin{equation*} 
\tq(x)=\begin{cases}
    q(x) & \text{if } x_2>0\\
   -q(x_1,-x_2) & \text{if } x_2<0.
  \end{cases}
\end{equation*}

If $q \in \bm(\hp)$ then we will also need to consider its odd extension, still denoted $\tq$. Let $\varphi \in C^0_0(\real^2)$ be a test function and split $\varphi$ into its odd and even parts: $\varphi = \varphi_o + \varphi_e$. Then $\langle \tq, \varphi \rangle \equiv 2\langle q, \varphi_o \rangle $. Of course $\tq \in \bm(\real^2)$. Furthermore, if $q \in \dot{H}^{-1}(\hp)$ then $\tq \in \dot{H}^{-1}(\real^2)=\{\curl w \; | \; w \in L^2(\real^2)\}$.

Let us consider the Biot-Savart velocity field in $\real^2$ induced by the odd extension of a measure $q \in \bm(\hp)$, namely $K \ast \tq$, where $K$ was introduced in \eqref{defK}. The odd symmetry of $\tq$ induces a covariant symmetry under which the first component $(K\ast\tq)^1$ is even while the second component $(K\ast\tq)^2$ is odd, with respect to $x_2$. Hence $K\ast\tq\bigl|_{\hp}$ is divergence free, tangent to the boundary of $\hp$, and its  curl in $\hp$ is $q$. This vector field can be written as
\begin{equation*}
K\ast\tq\bigl|_\hp=\int_{\R^2}K(x-y)\,d\tq(y)=\int_\hp K_\hp(x,y)\,dq(y),
\end{equation*}
where
\begin{equation} \label{khalfplane}
K_\hp(x,y)\equiv \frac{(x-y)^\perp}{2\pi|x-y|^2}-\frac{(x-\overline y)^\perp}{2\pi|x-\overline y|^2}
\end{equation}
where $\overline y=(y_1,-y_2)$ is the image of $y$.

The Biot-Savart law in the half-plane $\hp$ is the integral operator acting on $q$ given by
\begin{equation} \label{defBS}
K_\hp[q] = \int_{\hp} K_\hp(x,y)\, dq(y),
\end{equation}
and  $K_\hp$ is the Biot-Savart kernel in the half-plane. Note that $K_\hp[q]=K\ast\tq\big|_\hp$.

\section{Finding velocity from potential vorticity: the solution operator}\label{sect-green}
Let $u$ be the velocity in the half-plane induced by a potential vorticity $q$, the solution of the system of equations \eqref{velpotvort}. The aim of this section is to produce and understand the solution operator for  $u$ in terms of $q$.

We begin with an estimate for the Biot-Savart law induced by a measure $q \in \bm(\hp)$.

\begin{lemma}\label{lem-kh}
Let $q\in\bm(\hp)$ and consider $K_\hp[q]$ as introduced in \eqref{defBS}.
Then $K_\hp[q]\in L^{2,\infty}_\sigma$ and $\|K_\hp[q]\|_{L^{2,\infty}}\leq C\|q\|_{\bm(\hp)}$ for some universal constant $C>0$.

If, in addition, $q\in \dot{H}^{-1}(\hp)$ then $K_\hp[q]\in L^2_\sigma$ and $\|K_\hp[q]\|_{L^2}=\|q\|_{\dot{H}^{-1}(\hp)}$.
\end{lemma}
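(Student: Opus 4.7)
The plan is to reduce both statements to analogous full-plane facts using the identification $K_\hp[q]=K\ast\tq\bigl|_\hp$ recorded just before the lemma. The divergence-freeness of $K_\hp[q]$ and its tangency to $\partial\hp$ were already observed there from the symmetry of the odd extension, so only the norm estimates require justification.

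For the $L^{2,\infty}$ bound, since $|K(x)|=1/(2\pi|x|)$, the Biot--Savart kernel $K$ belongs to $L^{2,\infty}(\R^2)$. A standard weak-type convolution inequality---convolution with an $L^{2,\infty}$ kernel maps $L^1(\R^2)$ into $L^{2,\infty}(\R^2)$, and this extends to convolution with a bounded Radon measure by approximation---yields
\begin{equation*}
\|K\ast\tq\|_{L^{2,\infty}(\R^2)}\leq C\|K\|_{L^{2,\infty}}\|\tq\|_{\bm(\R^2)}.
\end{equation*}
Since $\|\tq\|_{\bm(\R^2)}=2\|q\|_{\bm(\hp)}$ (this follows from the definition of the odd extension through its action on even/odd decompositions of test functions) and restriction to $\hp$ does not increase the $L^{2,\infty}$ quasinorm, the first claim follows.

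For the second claim, the main step is the isometry $\|\tq\|_{\dot H^{-1}(\R^2)}^2=2\|q\|_{\dot H^{-1}(\hp)}^2$. The upper bound comes from a direct extension: given any $w\in L^2(\hp)^2$ with $\curl w=q$, set $\tw(x_1,x_2)=(w_1(x_1,-x_2),\,-w_2(x_1,-x_2))$ for $x_2<0$; a chain-rule computation shows $\curl\tw=\tq$ on $\R^2$ with $\|\tw\|_{L^2(\R^2)}^2=2\|w\|_{L^2(\hp)}^2$. Taking the infimum over admissible $w$ gives $\|\tq\|_{\dot H^{-1}(\R^2)}^2\le 2\|q\|_{\dot H^{-1}(\hp)}^2$. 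For the reverse inequality I would take any $W\in L^2(\R^2)^2$ with $\curl W=\tq$ and orthogonally project it onto the closed subspace of fields whose first component is even and second component is odd in $x_2$; call this projection $W_\sharp$. A direct calculation using that $\tq$ is odd shows $\curl W_\sharp=\tq$, so $W_\sharp\bigl|_\hp$ is admissible for computing $\|q\|_{\dot H^{-1}(\hp)}$, and the symmetry of $W_\sharp$ together with the orthogonal projection property give $\|W_\sharp|_\hp\|_{L^2(\hp)}^2=\tfrac12\|W_\sharp\|_{L^2(\R^2)}^2\le\tfrac12\|W\|_{L^2(\R^2)}^2$; taking infima closes the isometry.

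To conclude, $K\ast\tq$ is the unique $L^2(\R^2)$ divergence-free vector field with curl $\tq$ (any other differs by a harmonic $L^2$ field, hence vanishes), so $\|K\ast\tq\|_{L^2(\R^2)}=\|\tq\|_{\dot H^{-1}(\R^2)}$; combining with $\|K\ast\tq\|_{L^2(\R^2)}^2=2\|K_\hp[q]\|_{L^2(\hp)}^2$ (from the even/odd symmetry of the components noted in the excerpt) gives the claimed identity. I expect the main obstacle to be the two-sided comparison of $\dot H^{-1}$ norms between $\hp$ and $\R^2$: the easy direction is the explicit symmetric extension of $w$, while the matching lower bound rests on choosing exactly the right orthogonal projection and verifying that it preserves the curl distributionally.
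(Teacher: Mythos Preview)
Your proof of the $L^{2,\infty}$ bound is essentially the paper's argument: both invoke Young's inequality in Lorentz spaces for $K\ast\tq$ on $\R^2$ and then restrict to $\hp$.

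For the $L^2$ identity, your route is correct but different from---and longer than---the paper's. You pass to the full plane, establish the isometry $\|\tq\|_{\dot H^{-1}(\R^2)}^2=2\|q\|_{\dot H^{-1}(\hp)}^2$ by a symmetric extension for the upper bound and an orthogonal symmetrization of an arbitrary potential $W$ for the lower bound, then combine with the full-plane Helmholtz identity $\|K\ast\tq\|_{L^2}=\|\tq\|_{\dot H^{-1}(\R^2)}$ and the even/odd symmetry of the components of $K\ast\tq$. The paper instead stays entirely in $\hp$ and uses the Leray projector: if $q=\curl w$ with $w\in L^2(\hp)$, then $\PP w\in L^2_\sigma$ still has curl $q$, so $K_\hp[q]=\PP w$ and the identity $\|K_\hp[q]\|_{L^2}=\|\PP w\|_{L^2}=\|q\|_{\dot H^{-1}(\hp)}$ is immediate from the observation (recorded in the Introduction) that $\|q\|_{\dot H^{-1}}=\|\PP w\|_{L^2}$. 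Your approach is more self-contained, since it does not presuppose that last identity or the uniqueness implicit in $K_\hp[q]=\PP w$; the paper's approach is considerably shorter once those facts are taken as known.
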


\begin{proof}
Recall that, since $q\in\bm(\hp)$, its odd extension $\tq$ belongs to $\bm(\R^2)$. Since $K\in L^{2,\infty}(\R^2)$ it follows from the Young inequality in Lorentz spaces that $K\ast\tq\in  L^{2,\infty}(\R^2)$. Therefore $K_\hp[q] \in L^{2,\infty}_\sigma(\hp)$ and the estimate follows from said Young's inequality.

Assume, additionally, that $q\in \dot{H}^{-1}(\hp)$. Then $q=\curl w$ with $w\in L^2$. From the properties of the Leray projector $\PP$, we know that $\PP w\in L^2_\sigma$, i.e. $\PP w$ is divergence free and tangent to the boundary of $\hp$. Furthermore $w-\PP w$ is a gradient, so that $\curl \PP w = q$. We infer that $K_\hp[q]=\PP w\in L^2$ and $\|K_\hp[q]\|_{L^2} = \|\PP w\|_{L^2} = \|q\|_{\dot{H}^{-1}(\hp)}$. This completes the proof of the lemma.
\end{proof}

We claim that the solution of \eqref{velpotvort}, $u$, satisfies
\begin{equation*}
(\I+\al\A) u=K_\hp[q],
\end{equation*}
where $\A=-\PP\Delta$ is the Stokes operator.

Indeed, we have that
\begin{equation*}
\curl[(\mathbb{I}-\al\Delta)u]=q=\curl K_\hp[q]
\end{equation*}
so $(\mathbb{I}-\al\Delta)u$ and $ K_\hp[q]$ differ by a gradient. Since the Leray projection vanishes for gradient fields and reduces to the identity on divergence free vector fields which are tangent to the boundary, it follows that
\begin{equation*}
K_\hp[q]=\PP K_\hp[q]=\PP[(\mathbb{I}-\al\Delta)u]=\PP u-\al\PP\Delta u=u+\al\A u,
\end{equation*}
as desired.

Since $u$ vanishes on the boundary of $\hp$ we find that
\begin{equation}
u=(\I+\al\A )^{-1}K_\hp[q].
\end{equation}

From Lemma \ref{lem-kh} we have that, if $q \in \dot{H}^{-1}(\hp)$ then $K_\hp[q] \in L^2(\hp)$. Hence, from Lemma \ref{lem-stokes}, it follows that $u \in L^2(\hp)$.

\begin{definition} \label{soloperator}
The solution operator for the system of equations \eqref{velpotvort}, denoted $\B=\B(q)$, is given by
\begin{eqnarray*}
    \B: & \dot{H}^{-1}(\hp) & \to       L^2(\hp)  \\
        &         q         & \mapsto   (\I+\al\A )^{-1}K_\hp[q].
\end{eqnarray*}
\end{definition}

In view of Lemma \ref{lem-stokes} we actually have  $\B(q) \in H^2\cap H^1_0$, if $q \in \dot{H}^{-1}(\hp)$, and that $\B$ is continuous from $\dot{H}^{-1}$ to $H^1_0$. With this notation the solution of \eqref{velpotvort} is $u=\B(q)$.

\section{The existence result}
\label{sect-existence}

In this section we will establish Theorem \ref{theo-existence}. The strategy is standard, so we will only give a sketch of the proof. We emphasize that, in this section, $\alpha > 0$ is fixed.

\begin{proof}[Proof of Theorem \ref{theo-existence}]
Fix $q_0 \in \bm(\hp) \cap \dot H^{-1}$. We wish to obtain a solution of the $\al-$Euler equations with initial potential vorticity $q_0$. The strategy will be to choose a sequence of smooth approximations $q_0^n$ to $q_0$ and solve the $\al-$Euler equations with $q_0^n$ as initial potential vorticities. This results in a sequence of smooth, time-dependent, potential vorticities $q^n$ and vector fields $u^n$ which we subsequently pass to a weak limit. We will provide sufficient estimates for $q^n$ and $u^n$ to show that such a weak limit is a weak solution of the $\al-$Euler equations and has, as initial potential vorticity, $q_0$.

The initial velocity $u_0$ is determined through the solution operator: $u_0 = \B(q_0)$, where $\B$ was introduced in  Definition \ref{soloperator}. Furthermore, since  $q_0\in \dot H^{-1}$ we have, using Lemma \ref{lem-stokes}, that  $u_0 \in H^2 \cap H^1_0$.

We begin by constructing the smooth approximations. Let $\tq_0$ be the odd extension of $q_0$ to the full plane as described in Section \ref{sect-notandprelim}; of course $\tq_0 \in \bm(\real^2)$ and, also, $\tq_0 \in \dot{H}^{-1}(\real^2)$. Consider the Biot-Savart law in $\real^2$ for $\tq_0$: $K \ast \tq_0$, where $K$ is the Biot-Savart kernel given in \eqref{defK}. Since $\tq_0\in \dot H^{-1}$ we have that $K \ast \tq_0 \in L^2(\R^2)$. We mollify $\tq_0$ with an even, smooth, compactly supported mollifier $\phi_n$, and we denote by $\tq^n_0=\tq_0\ast\phi_n$ the resulting mollified potential vorticity. Let $q^n_0=\tq^n_0\bigl|_\hp$. Clearly we have that
\begin{equation*}
\nl1{q^n_0}\leq\|q_0\|_{\bm(\hp)}, \,  \|q^n_0\|_{\dot H^{-1}} \leq\|q_0\|_{\dot H^{-1}}.
\end{equation*}
In addition, since clearly we have $\tq_0^n \rightharpoonup \tq_0 \text{ weak}-\ast \bm(\real^2)$, it follows that
\begin{equation*}
  q_0^n \rightharpoonup q_0 \text{ weak}-\ast \bm(\hp).
\end{equation*}
By construction we have $K\ast \tq_0^n \to K \ast \tq_0$ strongly in $L^2(\real^2)$. Therefore
\begin{equation} \label{f0n2f0L2}
K_\hp[q_0^n]  \to K_\hp[q_0]  \text{ strongly in } L^2(\hp).
\end{equation}

Let $u_0^n = \B(q_0^n)$. Using \eqref{f0n2f0L2} and Lemma \ref{lem-stokes} we find that
\begin{equation}\label{convl2}
\nl2{u^n_0}^2+\al\nl2{\nabla u^n_0}^2\to\nl2{u_0}^2+\al\nl2{\nabla u_0}^2
\end{equation}
as $n\to\infty$.

Because $\tq_0 \in \dot{H}^{-1}(\real^2)$ we deduce that, for each fixed $n$, $\tq_0^n \in L^2(\real^2)$. Hence $q^n_0\in L^2$ and therefore we have that $K_\hp[q^n_0]\in H^1\cap L^2_\sigma$, so that
$u_0^n= (\I+\al\A)^{-1}K_\hp[q^n_0]$ belongs to $H^3\cap H^1_0$. For such initial data there exists a global solution $u^n$ in $H^3$ of the $\al$--Euler equations having $u_0^n$ as initial velocity. To see this one can use, for instance the method employed in \cite{lopes_filho_convergence_2015}. Since $q^n=\curl(u^n-\al\Delta u^n)$ is transported by $u^n$ and $\dive u^n = 0$, it follows that $\|q^n(\cdot,t)\|_{L^1}$ is a conserved quantity. Because $q^n(0)=q^n_0$ is bounded in $L^1$ we infer that $q^n$ is bounded in $L^\infty(\R_+;L^1)$ and
\begin{equation}\label{qnbound}
\|q^n\|_{L^\infty(\R_+;L^1)}=\nl1{q^n_0}\leq\|q_0\|_{\bm(\hp)}.
\end{equation}
In addition, if we multiply the equation of $u^n$ by $u^n$ and integrate by parts we get the classical energy equality:
\begin{equation}\label{unbound}
\nl2{u^n(t)}^2+\al\nl2{\nabla u^n(t)}^2=\nl2{u^n(0)}^2+\al\nl2{\nabla u^n(0)}^2
\qquad\forall t\geq0.
\end{equation}
In particular, $u^n$ is bounded in $L^\infty(\R_+;H^1)$.

Given the boundedness of $u^n$ in $L^\infty(\R_+;H^1)$ and of $q^n$ in $L^\infty(\R_+;L^1)$, we can extract subsequences relabeled $u^n$ and $q^n$ which converge
\begin{equation*}
u^n\rightharpoonup u \quad\text{in }L^\infty(\R_+;H^1)\text{ weak}\ast
\end{equation*}
and
\begin{equation}\label{qnweakconv}
q^n\rightharpoonup q \quad\text{in }L^\infty(\R_+;\bm(\hp))\text{ weak}\ast.
\end{equation}

As the equations are nonlinear, this convergence is not sufficient to pass to the limit in the weak form of the $\al$-Euler equations. We now proceed as follows. Let $p\in(1,2)$ and $q\in(2,\infty)$ be  dual indexes: $\frac1p+\frac1q=1$. We will prove that $\partial_t u^n$ is bounded in $L^p$ independently of $n$. Let $\varphi\in L^q_\sigma$. Recall that $\A=-\PP\Delta$ is the Stokes operator. Set $\psi=(\I+\al\A)^{-1}\varphi$. Then $\psi \in W^{2,q}\cap W^{1,q}_0\cap L^q_\sigma$ and $\|\psi\|_{W^{2,q}}\leq C\nl q\varphi$ (see \cite{borchers_$l^2$_1988}).

The velocity formulation of the $\al$--Euler equations can be written as follows (see \cite{iftimie_remarques_2002}):
\begin{multline*} 
\dt (u^n-\al\Delta u^n)+u^n\cdot\nabla u^n
-\al\sum_{j,k}\partial_j\partial_k(u^n_j\partial_k u^n)
+\al\sum_{j,k}\partial_j(\partial_k u^n_j\partial_k u^n)\\
 -\al\sum_{j,k}\partial_k(\partial_k u^n_j\nabla u^n_j)
+\nabla p^n=0.
\end{multline*}

We apply the Leray projector $\PP$ above and multiply by $\psi$. We get
\begin{equation} \label{lasteq}
\begin{aligned}
\langle \partial_t(u^n+\al\A u^n),\psi\rangle&= -\langle  \PP\bigl[u^n\cdot\nabla u^n
-\al\sum_{j,k}\partial_j\partial_k(u^n_j\partial_k u^n)
+\al\sum_{j,k}\partial_j(\partial_k u^n_j\partial_k u^n)\\
&\hskip 6cm  -\al\sum_{j,k}\partial_k(\partial_k u^n_j\nabla u^n_j)\bigr],\psi\rangle\\
&=- \langle  u^n\cdot\nabla u^n
-\al\sum_{j,k}\partial_j\partial_k(u^n_j\partial_k u^n)
+\al\sum_{j,k}\partial_j(\partial_k u^n_j\partial_k u^n)\\
&\hskip 6cm  -\al\sum_{j,k}\partial_k(\partial_k u^n_j\nabla u^n_j),\psi\rangle.
\end{aligned}
\end{equation}

Observe that
\begin{equation*}
\langle \partial_t(u^n+\al\A u^n),\psi\rangle
=\langle \partial_t u^n,(\I+\al\A)\psi\rangle
=\langle \partial_t u^n,\varphi\rangle.
\end{equation*}
Next we estimate the first of the four terms in \eqref{lasteq}:
\begin{equation*}
|\langle  u^n\cdot\nabla u^n,\psi\rangle|=\bigl|\int_\hp  u^n\cdot\nabla u^n\cdot\psi\bigl|\leq \|u^n\|_{H^1}^2\nl\infty\psi\leq C\|u^n\|_{H^1}^2\|\psi\|_{W^{2,q}}\leq C\|u^n\|_{H^1}^2\nl q\varphi.
\end{equation*}

To estimate the second and third terms in \eqref{lasteq} we integrate by parts, using that $u^n$ and $\psi$ vanish at the boundary. We deduce that:
\begin{multline*}
  |\langle  \al\partial_j\partial_k(u^n_j\partial_k u^n),\psi\rangle|
=\bigl|\al\int\partial_j\partial_k(u^n_j\partial_k u^n)\cdot\psi\bigr|
=\bigl|\al\int u^n_j\partial_k u^n\cdot\partial_j\partial_k\psi\bigr|\\
\leq C\nl{\frac{2q}{q-2}}{u^n}\nl2{\partial_k u^n}\nl q {\partial_j\partial_k\psi}
\leq C\|u^n\|_{H^1}^2\nl q\varphi
\end{multline*}
and
\begin{multline*}
  |\langle  \al\partial_j(\partial_k u^n_j\partial_k u^n),\psi\rangle|
=\bigl|\al\int\partial_k u^n_j\partial_k u^n\cdot\partial_j\psi\bigr|
\leq \al\|u^n\|_{H^1}^2\nl\infty{\nabla\psi} \\
\leq C\|u^n\|_{H^1}^2\|\psi\|_{W^{2,q}}
\leq C\|u^n\|_{H^1}^2\nl q\varphi.
\end{multline*}
The same estimate holds true for the last term in \eqref{lasteq}. We have thus obtained the following bound
\begin{equation*}
|\langle \partial_t u^n,\varphi\rangle|\leq C\|u^n\|_{H^1}^2\nl q\varphi\quad\forall\varphi\in L^q_\sigma.
\end{equation*}
Since $L^q_\sigma$  is  dual to $L^p_\sigma$ (see \cite{borchers_$l^2$_1988}),  we deduce that
\begin{equation*}
\nl p{\partial_t u^n}\leq C\|u^n\|_{H^1}^2.
\end{equation*}
Therefore $\partial_t u^n$ is bounded in $L^\infty(\R_+;L^p)$. By the Ascoli theorem and the compact embedding $L^p\hookrightarrow W^{-1,p}_{loc}$, we infer, passing to subsequences as necessary,  that:
\begin{equation}\label{unifconv}
u^n\to u \quad\text{strongly in }C^0(\R_+;W^{-1,p}_{loc}).
\end{equation}

Next we have that:
\begin{claim} \label{extraregul}
The operator $\B$ is bounded from $\bm(\hp)$  to $W^{2,p}_{loc}(\hp)$ for all $p<2$.
\end{claim}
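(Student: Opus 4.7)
The plan is to combine Lemma \ref{lem-kh}, which yields $\|K_\hp[q]\|_{L^{2,\infty}}\le C\|q\|_{\bm(\hp)}$, with the $L^p$-theory for the Stokes resolvent system on the half-plane from \cite{borchers_$l^2$_1988}. First I would approximate $q$ by smooth, compactly supported $q_n$ with $q_n\rightharpoonup q$ weak-$\ast$ in $\bm(\hp)$, exactly as in the existence argument above. Each such $q_n$ lies in $\dot{H}^{-1}(\hp)$ (its odd extension has zero mean, so its full-plane Biot-Savart image is square-integrable), hence $u_n=\B(q_n)\in H^3\cap H^1_0$ solves classically
\begin{equation*}
u_n-\alpha\Delta u_n+\nabla\pi_n=K_\hp[q_n],\quad \dive u_n=0,\quad u_n|_{\partial\hp}=0.
\end{equation*}

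Next I would derive uniform local $W^{2,p}$ bounds on $u_n$. By Lemma \ref{lem-kh}, $\|K_\hp[q_n]\|_{L^{2,\infty}(\hp)}\le C\|q\|_{\bm(\hp)}$ uniformly in $n$, and on any bounded $\Omega'\subset\hp$ the embedding $L^{2,\infty}(\Omega')\hookrightarrow L^p(\Omega')$ (valid for $p<2$ on finite-measure sets) gives $\|K_\hp[q_n]\|_{L^p(\Omega')}\le C_{\Omega'}\|q\|_{\bm(\hp)}$. Standard interior and boundary $L^p$-regularity for the Stokes resolvent system (straightforward because $\partial\hp$ is flat) then yields, for any $\Omega\Subset\Omega'\subset\hp$,
\begin{equation*}
\|u_n\|_{W^{2,p}(\Omega)}\le C_{\Omega,\Omega'}\bigl(\|K_\hp[q_n]\|_{L^p(\Omega')}+\|u_n\|_{L^p(\Omega')}\bigr).
\end{equation*}

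The delicate step is controlling $\|u_n\|_{L^p(\Omega')}$ uniformly in $n$, since the $H^1$ bound from Lemma \ref{lem-stokes} involves $\|K_\hp[q_n]\|_{L^2}$, which can blow up. I would handle this via real interpolation: the Stokes operator $(\I+\alpha\A)^{-1}$ is bounded from $L^{p_1}_\sigma$ to itself and from $L^{p_2}_\sigma$ to itself for $1<p_1<2<p_2<\infty$, so by the $K$-method it extends as a bounded operator on $(L^{p_1}_\sigma,L^{p_2}_\sigma)_{\theta,\infty}=L^{2,\infty}_\sigma$ at the appropriate $\theta$. This gives $\|u_n\|_{L^{2,\infty}(\hp)}\le C\|K_\hp[q_n]\|_{L^{2,\infty}(\hp)}\le C\|q\|_{\bm(\hp)}$, which suffices by the same local Lorentz embedding.

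Finally, passing to a weak limit in $W^{2,p}(\Omega)$ and identifying the limit with $\B(q)$, viewed as a continuous extension of $\B$ from the dense subspace $\bm(\hp)\cap\dot{H}^{-1}(\hp)$, completes the proof. The principal obstacle is the uniform local $L^p$ estimate on $u_n$; the interpolation argument above is the cleanest route, but an alternative would be to use the interior/boundary decomposition of $\B$ described in the introduction and developed in Section \ref{sect-intbdry}, estimating the interior piece by the Lorentz--Young inequality applied to $K\ast G_\alpha\ast\tq$ (using $G_\alpha\in L^1$) and the boundary piece by pointwise kernel bounds.
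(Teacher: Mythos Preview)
Your strategy uses the same two ingredients as the paper's proof---Lemma \ref{lem-kh} to place $K_\hp[q]$ in $L^{2,\infty}_\sigma$, and real interpolation of the $L^r$ Stokes resolvent theory from \cite{borchers_$l^2$_1988}---but you apply the interpolation to a weaker statement than necessary and then compensate with approximation and local regularity. The paper is more direct: since the operator $\varphi\mapsto\nabla^2(\I+\al\A)^{-1}\varphi$ is bounded $L^r_\sigma\to L^r$ for every $1<r<\infty$, real interpolation gives boundedness $L^{2,\infty}_\sigma\to L^{2,\infty}$ outright (and similarly for $\nabla(\I+\al\A)^{-1}$ and $(\I+\al\A)^{-1}$). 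The local embedding $L^{2,\infty}\hookrightarrow L^p_{loc}$ for $p<2$ then yields $(\I+\al\A)^{-1}:L^{2,\infty}_\sigma\to W^{2,p}_{loc}$ immediately, with no approximation, no local estimates, and no limit passage.

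Your detour through local Stokes regularity also has a technical gap: the standard local $W^{2,p}$ estimate for the Stokes system carries a pressure term $\|\pi_n-\overline{\pi}_n\|_{L^p(\Omega')}$ on the right-hand side, which you have not controlled. Bounding it would require an interpolated global estimate on $\nabla\pi_n$ in $L^{2,\infty}$---also available from \cite{borchers_$l^2$_1988}---but once you are interpolating that, you may as well interpolate the global bound on $\nabla^2 u_n$ and recover the paper's shorter argument. The approximation and weak-limit step is then unnecessary, since $(\I+\al\A)^{-1}$ already acts on $L^{2,\infty}_\sigma$ by interpolation and $K_\hp[q]$ lies there for every $q\in\bm(\hp)$.
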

\begin{proof}[Proof of Claim:]
From the results of \cite{borchers_$l^2$_1988} we know that, for all $1<r<\infty$, the operator $\varphi\mapsto \nabla^2(\I+\al\A)^{-1}\varphi$ is bounded from $L^r_\sigma$ to $L^r$. By interpolation, we infer that it is also bounded from $L^{2,\infty}_\sigma$ to $L^{2,\infty}$. The embedding $L^{2,\infty}\hookrightarrow L^p_{loc}$ implies that $\varphi\mapsto \nabla^2(\I+\al\A)^{-1}\varphi$ is bounded from $L^{2,\infty}_\sigma$ to $L^p_{loc}$. A similar argument shows that the operators  $\varphi\mapsto \nabla(\I+\al\A)^{-1}\varphi$ and $\varphi\mapsto (\I+\al\A)^{-1}\varphi$ are bounded from $L^{2,\infty}_\sigma$ to $L^p_{loc}$. We conclude that the operator $(\I+\al\A)^{-1}$ is bounded from $L^{2,\infty}_\sigma(\hp)$ to $W^{2,p}_{loc}(\hp)$ for all $p<2$.

The conclusion now follows from Lemma \ref{lem-kh}. Indeed we have that $K_\hp[q]\in L^{2,\infty}_\sigma$ for all $q\in\bm(\hp)$ and $\B(q)=(\I+\al\A)^{-1}K_\hp[q]$.
\end{proof}

Since $q^n$ is bounded in $\bm(\hp)$ we find, in view of Claim \ref{extraregul}, that $u^n$ is bounded in
$L^\infty(\R_+; W^{2,p}_{loc}(\hp))$ for all $p<2$. Then, interpolation together with the uniform convergence \eqref{unifconv} yield
\begin{equation*}
u^n\to u \quad\text{strongly in }C^0(\R_+;W^{s,p}_{loc})\qquad \forall s<2,\, p<2.
\end{equation*}
Using Sobolev embeddings we further deduce that
\begin{equation*}
u^n\to u \quad\text{strongly in }C^0(\R_+\times \hp).
\end{equation*}

Recalling the weak convergence of $q^n$ expressed in \eqref{qnweakconv} we finally deduce that
\begin{equation*}
u^nq^n\to uq\quad\text{in }\mathscr{D}'(\R_+^*\times \hp)
\end{equation*}
so
\begin{equation*}
\dive(u^nq^n)\to \dive(uq)\quad\text{in }\mathscr{D}'(\R_+^*\times \hp).
\end{equation*}

We infer that $q$ is a solution of the $\al$--Euler equations. Moreover, the bound \eqref{qbound}  follows from \eqref{qnbound} and \eqref{qnweakconv}.

It remains to prove the bound \eqref{ubound}. We proceed in the following manner. From \eqref{unifconv} we deduce that $u^n(t)\to u(t)$ in $\mathscr{D}'$ for all $t$. Since $u^n(t)$ is bounded in $H^1$ we infer that $u^n(t)\rightharpoonup u(t)$ weakly in $H^1$ for all $t$. Therefore
\begin{equation*}
\nl2{u(t)}\leq\liminf_{n\to\infty}\nl2{u^n(t)}
\end{equation*}
and
\begin{equation*}
\nl2{\nabla u(t)}\leq\liminf_{n\to\infty}\nl2{\nabla u^n(t)}
\end{equation*}
for all $t\geq0$. We finally deduce from \eqref{convl2} and \eqref{unbound} that
\begin{align*}
\nl2{u(t)}^2+\al\nl2{\nabla u(t)}^2
&\leq\liminf_{n\to\infty}(\nl2{u^n(t)}^2+\al\nl2{\nabla u^n(t)}^2)\\
&=\liminf_{n\to\infty}(\nl2{u^n(0)}^2+\al\nl2{\nabla u^n(0)}^2)\\
&=\nl2{u_0}^2+\al\nl2{\nabla u_0}^2
\end{align*}
for all $t\geq0$. This proves \eqref{ubound} and completes the proof of Theorem \ref{theo-existence}.
\end{proof}

\section{Finding velocity from potential vorticity: interior and boundary parts} \label{sect-intbdry}

In Section \ref{sect-green} we found the solution operator which gives the velocity in terms of potential vorticity. The resulting expression, however, is not explicit enough for us to pass to the limit $\alpha \to 0$. Here we will produce a representation formula for $u$ in terms of $q$ in which we write  $u$ as a sum of a vector field $\uint$, constructed using the method of images but with possibly non-vanishing boundary value, and a vector field $\ubdry$ which corrects the boundary condition.

We will use the notation introduced in Section \ref{sect-notandprelim}, particularly \eqref{Gal}, \eqref{defBS} and \eqref{defkal}.

Let us start by introducing
\begin{equation}\label{def-ubar}
  \uint\equiv  G_\al\ast( K \ast\tq) \equiv K_\al \ast \tq.
\end{equation}

Then
\begin{equation*}
(\mathbb{I}-\al\Delta)\uint=K\ast\tq \quad\text{and}\quad\dive \uint=0\quad\text{in }\R^2.
\end{equation*}
Moreover, since $G_\al$ is radial it follows that $\uint$ inherits the symmetry properties of $K\ast\tq$. In particular, the second component of $\uint$ is odd with respect to $x_2$, so that $\uint^2$ vanishes at $x_2=0$.

Let
\begin{equation}\label{defw}
  \ubdry=u-\uint.
\end{equation}
Then $\ubdry$ is divergence free in $\hp$ and tangent to the boundary of $\hp$. Moreover,
\begin{equation*}
\curl(\I-\al\Delta)\ubdry =\curl(\I-\al\Delta)u-\curl(\I-\al\Delta)\uint=q-\curl(K\ast\tq)=0\quad\text{in }\hp.
\end{equation*}
Thus there exists some scalar function $p$ such that
\begin{equation*}
\ubdry-\al\Delta \ubdry+\nabla p=0 \quad\text{in }\hp.
\end{equation*}

We denote by $g$ the trace of the first component of $\ubdry$, $\ubdry^1$, on the boundary of $\hp$:
\begin{equation}\label{defginit}
g(x_1)\equiv \ubdry^1(x_1,0)=-\uint^1(x_1,0)=-G_\al\ast (K\ast\tq)^1(x_1,0).
\end{equation}

We conclude that $w=\ubdry$ satisfies the following system of equations
\begin{align}
  w-\al\Delta w+\nabla p&=0\label{stokesb1}\quad\text{in }\hp\\
\dive w&=0\label{stokesb2}\quad\text{in }\hp\\
w_1\bigl|_{x_2=0}&=g(x_1)\label{stokesb3}\\
w_2\bigl|_{x_2=0}&=0.\label{stokesb4}
\end{align}

We will find a formula for the solution of this problem through a method which was employed by Solonnikov \cite{solonnikov_estimates_1977} to find the Green's function of the evolutionary Stokes operator in the half-space. Let $\astu $ denote the convolution in the first variable and  $\asth $ denote the convolution on $\hp$ defined as follows. If $\varphi,\psi$ are functions defined on $\hp$, the convolution $\varphi\asth \psi$ is a function defined on $\hp$ whose value is
\begin{equation*}
\varphi \asth \psi(x_1,x_2)=\int_\R\int_0^{x_2} \varphi(y)\psi(x-y)\,dy
\end{equation*}

Next we introduce a pair of functions which will appear frequently in what follows:
\begin{equation}\label{eta1}
  \eta_1=\eta_1(x_1,x_2) = \frac{x_2^2-x_1^2}{|x|^4}, \text{ and}
\end{equation}

\begin{equation}\label{eta2}
  \eta_2=\eta_2(x_1,x_2) = \frac{x_1x_2}{|x|^4}.
\end{equation}

With this notation we have the following result.

\begin{proposition}\label{prop-green}
The solution $w=(w_1,w_2)$ of the problem \eqref{stokesb1}--\eqref{stokesb4} is given by the following formula:
\begin{align*}
  w_1&=-2\al g\astu\partial_2G_\al+\frac{2\al}\pi g\astu\partial_2G_\al\asth\eta_1\\
w_2&=-\frac{4\al}\pi g\astu \partial_2G_\al\asth \eta_2.
\end{align*}
\end{proposition}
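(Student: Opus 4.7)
My plan is to solve the boundary value problem \eqref{stokesb1}--\eqref{stokesb4} by taking the partial Fourier transform $\F_1$ in the tangential variable $x_1$, following Solonnikov's method. This reduces the PDE system on $\hp$ to a family of ODE boundary value problems on $(0,\infty)$ indexed by $\xi_1$, whose solution I will then identify as the partial Fourier transform of the claimed formula.

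First, taking the curl of \eqref{stokesb1} eliminates $p$ and yields $(\I - \al\Delta)\omega = 0$ in $\hp$ for the scalar vorticity $\omega = \partial_1 w_2 - \partial_2 w_1$, while taking the divergence of \eqref{stokesb1} together with \eqref{stokesb2} yields $\Delta p = 0$. Applying $\F_1$ and imposing decay as $x_2 \to +\infty$ selects the decaying exponentials and forces
\begin{equation*}
\wt \omega(\xi_1, x_2) = A(\xi_1)\, e^{-\xial x_2}, \qquad \wt p(\xi_1, x_2) = C(\xi_1)\, e^{-|\xi_1| x_2}.
\end{equation*}
Plugging these into the Fourier version of \eqref{stokesb1} and using $\al(\xial^2 - \xi_1^2) = 1$, the functions $\wt w_1$ and $\wt w_2$ each decompose into a particular solution proportional to $e^{-|\xi_1| x_2}$ (driven by $\wt p$) and a decaying homogeneous piece proportional to $e^{-\xial x_2}$. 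The divergence-free relation $i\xi_1 \wt w_1 + \partial_2 \wt w_2 = 0$ together with the boundary conditions \eqref{stokesb3}--\eqref{stokesb4} furnish a small linear system for the unknown coefficients. Solving it, using the crucial algebraic identity $\frac{1}{\xial - |\xi_1|} = \al(\xial + |\xi_1|)$ which follows from $\xial^2 - \xi_1^2 = 1/\al$, one obtains
\begin{align*}
\wt w_1 &= \wh g\, e^{-\xial x_2} - \al |\xi_1| (\xial + |\xi_1|)\, \wh g \bigl(e^{-|\xi_1| x_2} - e^{-\xial x_2}\bigr),\\
\wt w_2 &= -i \al \xi_1 (\xial + |\xi_1|)\, \wh g \bigl(e^{-|\xi_1| x_2} - e^{-\xial x_2}\bigr).
\end{align*}

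It remains to match these Fourier expressions with those of the right-hand side of the proposition. Identity \eqref{g1} gives $\wh g\, e^{-\xial x_2} = \F_1\bigl(-2\al\, g \astu \partial_2 G_\al\bigr)$, accounting for the first term of $w_1$. For the remaining pieces I recognise $\eta_1, \eta_2$ as derivatives of the half-plane Poisson kernel: from $\eta_1 = -\partial_2\bigl(\tfrac{x_2}{x_1^2 + x_2^2}\bigr)$ and $\eta_2 = -\tfrac{x_2}{2}\partial_1\bigl(\tfrac{1}{x_1^2+x_2^2}\bigr)$, identity \eqref{four0} yields, for $x_2 > 0$,
\begin{equation*}
\F_1 \eta_1(\xi_1, x_2) = \pi|\xi_1|\, e^{-|\xi_1| x_2}, \qquad \F_1 \eta_2(\xi_1, x_2) = -\tfrac{i\pi\xi_1}{2}\, e^{-|\xi_1| x_2}.
\end{equation*}
The transform $\F_1$ sends the half-plane convolution $\asth$ to a one-sided convolution in $x_2$ on $[0, x_2]$, so computing $\F_1(g \astu \partial_2 G_\al \asth \eta_j)$ reduces to the elementary integral $\int_0^{x_2} e^{-\xial y_2} e^{-|\xi_1|(x_2 - y_2)}\, dy_2 = (e^{-|\xi_1| x_2} - e^{-\xial x_2})/(\xial - |\xi_1|)$. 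Substituting and using once more $\frac{1}{\xial - |\xi_1|} = \al(\xial + |\xi_1|)$ recovers exactly the remaining terms in $\wt w_1$ and $\wt w_2$.

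The main obstacle is the careful bookkeeping needed to solve the linear system for the coefficients of $\wt w_1, \wt w_2$ cleanly and to identify $\F_1 \eta_1, \F_1 \eta_2$ via the Poisson-kernel representation; uniqueness of $w$ among solutions decaying at infinity is automatic from the construction, since the decaying solution of each ODE boundary value problem is unique.
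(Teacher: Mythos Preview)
Your proof is correct and follows essentially the same approach as the paper: apply $\F_1$ following Solonnikov's method, reduce to decaying exponentials $e^{-|\xi_1|x_2}$ and $e^{-\xial x_2}$, solve the resulting linear system for the coefficients, and identify the physical-space formula via the integral $\int_0^{x_2} e^{-\xial y_2} e^{-|\xi_1|(x_2-y_2)}\,dy_2$ together with the Fourier transforms of $\partial_2 G_\al$ and $\eta_1,\eta_2$. The only cosmetic differences are that the paper uses the auxiliary quantity $A=w+\nabla p$ (satisfying $(\I-\al\Delta)A=0$) in place of your $\omega=\curl w$, and inverts $\F_1$ term by term rather than computing $\F_1$ of the target formula and matching; neither changes the substance.
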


\begin{remark}
We note that, if $\varphi$, $\psi$ and $\rho$ are functions on $\hp$, then
\[\varphi \astu (\psi \asth \rho) = (\varphi \astu \psi )\asth \rho \]
as long as $\varphi$ is independent of $x_2$.
\end{remark}

\begin{proof}
We will use the notations $\F_1$ and $\tilde{}$ introduced in \eqref{tildenotation}. Taking the divergence of \eqref{stokesb1} yields $\Delta p=0$. We apply $\F_1$ and deduce that $(\partial_{x_2}^2-\xi_1^2)\wt p=0$. So $\wt p$ is a linear combination of $e^{\pm x_2|\xi_1|}$ with coefficients functions of $\xi_1$. Because $\wt p$ can't exhibit exponential growth at infinity, we infer that
\begin{equation*}
  \wt p(\xi_1,x_2)=C_1(\xi_1)e^{-x_2|\xi_1|}.
\end{equation*}

Let now
\begin{equation*}
  A=w+\nabla p.
\end{equation*}
Because $p$ is harmonic, relation \eqref{stokesb1} implies that $A-\al\Delta A=0$. We apply as above $\F_1$ and deduce that $(\partial_{x_2}^2-\xi_1^2-\frac1\al)\wt A=0$. Since $\wt A$ can't exhibit exponential growth at infinity either, we infer that there exists $C_2(\xi_1)$ and $C_3(\xi_1)$ such that
\begin{equation*}
\wt A(\xi_1,x_2)=e^{-x_2\xial}
\begin{pmatrix}
  C_2(\xi_1)\\ C_3(\xi_1)
\end{pmatrix}.
\end{equation*}

We conclude that
\begin{equation}\label{uhat}
\begin{aligned}
\wt w(\xi_1,x_2)&=\F_1(A-\nabla p)\\
&=e^{-x_2\xial}
\begin{pmatrix}
  C_2(\xi_1)\\ C_3(\xi_1)
\end{pmatrix}
-
\begin{pmatrix}
  i\xi_1\\ \partial_{x_2}
\end{pmatrix}
\bigl[C_1(\xi_1)e^{-x_2|\xi_1|}\bigr]\\
&=e^{-x_2\xial}
\begin{pmatrix}
  C_2(\xi_1)\\ C_3(\xi_1)
\end{pmatrix}
-
e^{-x_2|\xi_1|}C_1(\xi_1)\begin{pmatrix}
  i\xi_1\\ -|\xi_1|
\end{pmatrix}.
\end{aligned}
\end{equation}

Applying the Fourier transform $\F_1$ to \eqref{stokesb2}--\eqref{stokesb4} we get
\begin{align}
i\xi_1\wt w_1+\partial_{x_2}\wt w_2&=0\label{fi1}\\
\wt w_1(\xi_1,0)&=\wh g(\xi_1)\label{fi2}\\
 \wt w_2(\xi_1,0)&=0. \label{fi3}
\end{align}
Using \eqref{uhat} in the three equations above yields a linear system of three equations in the unknowns $C_1$, $C_2$ and $C_3$. This system can be easily solved allowing in return to compute $\wt w$. Plugging \eqref{uhat} in \eqref{fi2} gives
\begin{equation*}
C_2(\xi_1)= \wh g(\xi_1)+i\xi_1 C_1(\xi_1)
\end{equation*}
and in \eqref{fi3} gives
\begin{equation*}
  C_3(\xi_1)=-|\xi_1|C_1(\xi_1)
\end{equation*}
so that
\begin{equation*}
\wt w(\xi_1,x_2)=  e^{-x_2\xial}
       \begin{pmatrix}
         \wh g(\xi_1)\\0
       \end{pmatrix}
+
C_1(\xi_1)\begin{pmatrix}
  i\xi_1\\ -|\xi_1|
\end{pmatrix}(e^{-x_2\xial}-e^{-x_2|\xi_1|}).
\end{equation*}
Using this in \eqref{fi1} yields the following value for $C_1$:
\begin{equation*}
C_1(\xi_1)=\frac{i\xi_1}{\xi_1^2-|\xi_1|\xial}\wh g(\xi_1)=-i\al \frac{\xi_1}{|\xi_1|}(|\xi_1|+\xial)\wh g(\xi_1).
\end{equation*}

So
\begin{align*}
\wt w&=e^{-x_2\xial}
       \begin{pmatrix}
         \wh g(\xi_1)\\0
       \end{pmatrix}
+
 \al \begin{pmatrix}
|\xi_1|\\ i\xi_1
  \end{pmatrix}
(|\xi_1|+\xial)(e^{-x_2\xial}-e^{-x_2|\xi_1|})\wh g(\xi_1).
\end{align*}
Observing that
\begin{equation*}
  \frac{e^{-x_2\xial}-e^{-x_2|\xi_1|}}{|\xi_1|-\xial}
=\int_0^{x_2}e^{-y_2\xial}e^{-|\xi_1|(x_2-y_2)}\,dy_2
\end{equation*}
we finally find the following formula for $\wt w$:
\begin{equation}\label{formulauhat}
\wt w=e^{-x_2\xial}
       \begin{pmatrix}
         \wh g(\xi_1)\\0
       \end{pmatrix}
+
  \begin{pmatrix}
i\frac{\xi_1}{|\xi_1|}\\ -1
  \end{pmatrix}
i\xi_1\wh g(\xi_1)\int_0^{x_2}e^{-y_2\xial}e^{-|\xi_1|(x_2-y_2)}\,dy_2.
\end{equation}

We would now like to take the inverse Fourier transform in the first variable to find the formula for $w$. We deal first with the simplest term above, that is the term $e^{-x_2\xial}\wh g(\xi_1)$. Using \eqref{g1} we have that
\begin{equation}\label{u11}
  \F_1^{-1}[e^{-x_2\xial}\wh g(\xi_1)]=g\astu\F_1^{-1}(e^{-x_2\xial})=-2\al g\astu\partial_2G_\al
\end{equation}
where we recall that $\astu $ denotes the convolution in the first variable.

\medskip

Now let us express the other terms appearing in \eqref{formulauhat}. Taking the second component of \eqref{formulauhat} yields
\begin{equation*}
\wt w_2=-i\xi_1\wh g(\xi_1)\int_0^{x_2}e^{-y_2\xial}e^{-|\xi_1|(x_2-y_2)}\,dy_2.
\end{equation*}
We apply $\F_1^{-1}$ above. Recalling \eqref{four2} and \eqref{g1} we can write
\begin{align*}
 w_2
&=- \F_1^{-1}(\wh g(\xi_1))\astu \int_0^{x_2}\F_1^{-1}\bigl(e^{-y_2\xial}\bigr)\astu \F_1^{-1}\bigl(i\xi_1e^{-|\xi_1|(x_2-y_2)}\bigr)\,dy_2\\
&=-g\astu \int_0^{x_2}(-2\al\partial_2G_\al(\cdot,y_2))\astu \partial_1\F_1^{-1}\bigl(e^{-|\xi_1|(x_2-y_2)}\bigr)\,dy_2\\
&=-g\astu \int_0^{x_2}(-2\al\partial_2G_\al(\cdot,y_2))\astu \partial_1 \rho(\cdot,x_2-y_2)\,dy_2,
\end{align*}
with $\rho(x_1,x_2) \equiv x_2/|x|^2$.

Now, $\partial_1 \rho =-2\eta_2$, with $\eta_2$ as in \eqref{eta2}, so that
\begin{equation}\label{u2}
w_2=\frac{2\al}\pi g\astu \partial_2G_\al\asth \partial_1 \rho
=-\frac{4\al}\pi g\astu \partial_2G_\al\asth \eta_2.
\end{equation}

We can deduce in a similar fashion using \eqref{four4} and \eqref{g1} that
\begin{align*}
  \F_1^{-1}\Bigl[|\xi_1|\wh g(\xi_1)\int_0^{x_2}e^{-y_2\xial}e^{-|\xi_1|(x_2-y_2)}\,dy_2\Bigr]
&=g\astu\int_0^{x_2}\F_1^{-1}\bigl(e^{-y_2\xial}\bigr)\astu\F_1^{-1}\bigl(|\xi_1|e^{-|\xi_1|(x_2-y_2)}\bigr)\,dy_2\\
&=g\astu\int_0^{x_2}(-2\al\partial_2G_\al(\cdot,y_2))\astu \frac{1}{\pi}\eta_1(\cdot,x_2-y_2) \,dy_2\\
&=-\frac{2\al}\pi g\astu\partial_2G_\al\asth\eta_1,
\end{align*}
with $\eta_1$ as in \eqref{eta1}.

Putting together relations \eqref{formulauhat}, \eqref{u11}, \eqref{u2} and the above equalities completes the proof of the proposition.
\end{proof}

To be able to give a complete, explicit, formula for $u$ in terms of $q$ it remains to express the boundary data $g$ defined in \eqref{defginit} in terms of $q$. We proceed in the following manner.

Recall that
\begin{equation*}
g(x_1)=-(G_\al\ast (K\ast\tq)^1)(x_1,0).
\end{equation*}
Recall that $K=\nabla^\perp G$, where $G=\frac1{2\pi}\ln|x|$ is the Green function of the Laplacian in $\R^2$. Therefore
\begin{equation*}
g(x_1) = - ( G_\al \ast (\nabla^\perp G)^1\ast\tq )(x_1,0).
\end{equation*}
Now, since $G_\al$ is the Green function of $\mathbb{I}-\al\Delta$ in $\R^2$ we have that $(\mathbb{I}-\al\Delta)G_\al=\delta$ so $G_\al=\al\Delta G_\al+\delta$. Recall the vector field $H_\al$ introduced in \eqref{defhalpha}. We have, then:
\begin{align*}
G_\al \ast \nabla^\perp G
&=(\al\Delta G_\al+\delta)\ast  \nabla^\perp G\\
&=\al\Delta G_\al\ast\nabla^\perp G+\nabla^\perp G\\
&=\al G_\al\ast\nabla^\perp\Delta G+\nabla^\perp G\\
&=\al G_\al\ast\nabla^\perp\delta+\nabla^\perp G\\
&=\nabla^\perp (\al G_\al+G)\\
&= \nabla^\perp H_\al.
\end{align*}
We infer that
\begin{align*}
g(x_1)=(\partial_2H_\al\ast\tq)(x_1,0)
\end{align*}
Using that $\partial_2H_\al$ and $\tq$ are both odd with respect to $x_2$ we finally get that
\begin{equation*}
g(x_1)=-2\int_\hp  \partial_2H_\al(x_1-y_1,y_2)q(y)\,dy.
\end{equation*}

From \eqref{def-ubar} we have that
\begin{equation}\label{rel-ubar}
\uint=G_\al\ast \frac{x^\perp}{2\pi|x|^2}\ast\tq=K_\al\ast\tq
\end{equation}
where
\begin{equation*}
K_\al=G_\al\ast \frac{x^\perp}{2\pi|x|^2}=G_\al \ast \nabla^\perp G=\nabla^\perp H_\al.
\end{equation*}

We deduce from relations \eqref{defw}, \eqref{rel-ubar} and Proposition \ref{prop-green} the following formula for the solution of \eqref{velpotvort}.
\begin{proposition}\label{prop-formula}
The solution of  \eqref{velpotvort} is given by
\begin{equation*}
u=\uint+\ubdry
\end{equation*}
where $\uint$ is the interior part
\begin{equation*}
\uint\equiv K_\al\ast\tq
\end{equation*}
and $\ubdry$ is the contribution of the boundary which takes the form
\begin{align}
  \ubdry^1&\equiv-2\al g\astu\partial_2G_\al+\frac{2\al}\pi g\astu\partial_2G_\al\asth\eta_1 \label{ubdry1}\\
\ubdry^2&\equiv-\frac{4\al}\pi g\astu \partial_2G_\al\asth \eta_2 \notag
\end{align}
and
\begin{equation}\label{defg}
g(x_1)=-2\int_\hp  \partial_2H_\al(x_1-y_1,y_2)q(y)\,dy.
\end{equation}
\end{proposition}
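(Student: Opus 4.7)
The plan is to assemble the building blocks already developed in the paper: the definition \eqref{def-ubar} of $\uint$, the definition \eqref{defw} of $\ubdry=u-\uint$, the explicit solution formula from Proposition \ref{prop-green}, and the identity $G_\al\ast\nabla^\perp G=\nabla^\perp H_\al$. The claim is mostly a bookkeeping statement once these pieces are in place.

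First, I would verify that the decomposition $u=\uint+\ubdry$ places $\ubdry$ into exactly the Stokes framework of Proposition \ref{prop-green}. Since $K_\al=G_\al\ast K$ and $K\ast\tq$ is divergence free in $\R^2$, $\uint$ is divergence free. Radiality of $G_\al$ preserves the parity of $K\ast\tq$, so $\uint^2$ is odd in $x_2$ and hence vanishes on $\partial\hp$. One checks $(\I-\al\Delta)\uint=K\ast\tq$, so $\curl(\I-\al\Delta)\uint=\curl(K\ast\tq)=\tq=q$ in $\hp$. Thus $\ubdry=u-\uint$ is divergence free, tangent to $\partial\hp$, and $\curl(\I-\al\Delta)\ubdry=0$ in $\hp$. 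This last identity forces $\ubdry-\al\Delta\ubdry$ to be a gradient, giving \eqref{stokesb1}. The boundary conditions \eqref{stokesb3}--\eqref{stokesb4} follow from $\ubdry^1(\cdot,0)=-\uint^1(\cdot,0)=g$ (by definition \eqref{defginit}) and $\ubdry^2(\cdot,0)=u^2(\cdot,0)-\uint^2(\cdot,0)=0$. Applying Proposition \ref{prop-green} with this $g$ yields the stated formulas for $\ubdry^1$ and $\ubdry^2$.

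Second, I would derive the explicit formula \eqref{defg} for $g$ in terms of $q$. Starting from $g(x_1)=-(G_\al\ast(K\ast\tq)^1)(x_1,0)=-(G_\al\ast\nabla^\perp G\ast\tq)^1(x_1,0)$, I would compute $G_\al\ast\nabla^\perp G$. Using $(\I-\al\Delta)G_\al=\delta$ so that $G_\al=\delta+\al\Delta G_\al$, and commuting the convolution with the derivatives of $G$,
\begin{equation*}
G_\al\ast\nabla^\perp G=\nabla^\perp G+\al G_\al\ast\nabla^\perp\Delta G=\nabla^\perp G+\al\nabla^\perp G_\al=\nabla^\perp H_\al,
\end{equation*}
with $H_\al$ as in \eqref{defhalpha}. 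Therefore $g(x_1)=(\partial_2 H_\al\ast\tq)(x_1,0)$. Since $\partial_2 H_\al$ is even in $x_2$ while $\tq$ is odd in $x_2$, the product is odd; folding the $\R^2$ convolution against the odd extension back to an integral over $\hp$ doubles the contribution and flips the sign, producing exactly \eqref{defg}.

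There is no real obstacle: the proposition is a repackaging of the preceding lemma and of the definition of $\ubdry$. The only step requiring minor care is the distributional identity $G_\al\ast\nabla^\perp\Delta G=\nabla^\perp G_\al$, which rests on $\Delta G=\delta$ on $\R^2$ and is standard; and the conversion of the $\R^2$ convolution $\partial_2 H_\al\ast\tq$ to a $\hp$-integral against $q$, which is routine from the odd-extension construction in Section \ref{sect-notandprelim}.
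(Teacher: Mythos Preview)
Your proposal follows the paper's own argument step for step: decompose $u=\uint+\ubdry$, verify that $\ubdry$ solves \eqref{stokesb1}--\eqref{stokesb4}, invoke Proposition~\ref{prop-green}, and then compute $G_\al\ast\nabla^\perp G=\nabla^\perp H_\al$ via $G_\al=\delta+\al\Delta G_\al$ to rewrite $g$ as a half-plane integral against $q$. This is exactly how the paper proceeds in the passage leading up to Proposition~\ref{prop-formula}.

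There is one slip you should fix: $H_\al$ is radial (both $G_\al$ and $G$ depend only on $|x|$), hence even in $x_2$, so $\partial_2 H_\al$ is \emph{odd} in $x_2$, not even. The paper uses precisely this: since $\partial_2 H_\al$ and $\tq$ are both odd in $x_2$, the convolution $(\partial_2 H_\al\ast\tq)(x_1,0)$ picks up twice the $\hp$-contribution with a sign, giving \eqref{defg}. With your stated parities (even times odd) the integrand would be odd in $y_2$ and the convolution at $x_2=0$ would vanish. Your conclusion is correct, but the justification needs this correction.
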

Above, $\eta_1$ and $\eta_2$ were introduced in \eqref{eta1}, \eqref{eta2}.

\section{Boundary part estimates}\label{sect-l1est}

The purpose of this section is to obtain estimates for the boundary correction term $\ubdry$ in the interior of $\hp$. We will show that, for any $\varepsilon > 0$, if $x_2 > \varepsilon$ then $\ubdry$ is bounded, uniformly with respect to $x_1$ and $\alpha \ll 1$, in terms of the total variation of the potential vorticity.

We begin with a monotonicity result.
\begin{lemma}\label{signhalpha}
We have that $\partial_2 H_\al\geq0$ in the half-plane $\hp$.
\end{lemma}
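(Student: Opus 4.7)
The plan is to represent $H_\al$ as a convolution on $\R^2$ and then exploit an odd symmetry together with the radial monotonicity of $G_\al$. I would first verify the identity $H_\al = G_\al \ast G$. To check it, observe that
\begin{equation*}
(\I - \al \Delta) H_\al = \al (\I - \al \Delta) G_\al + (\I - \al \Delta) G = \al \delta + G - \al \delta = G,
\end{equation*}
using $\Delta G = \delta$ in $\R^2$, while $(\I - \al \Delta)(G_\al \ast G) = [(\I - \al \Delta) G_\al] \ast G = \delta \ast G = G$. Their difference is a tempered distributional solution of $(\I - \al \Delta) u = 0$, and since the symbol $1 + \al |\xi|^2$ is smooth and nonvanishing, that difference must vanish. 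Differentiating then gives $\partial_2 H_\al = G_\al \ast \partial_2 G$, with the explicit kernel $\partial_2 G(y) = y_2/(2\pi|y|^2)$.

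Next I would fold this convolution onto $\hp$ using the odd symmetry of $\partial_2 G$ in $y_2$. Splitting the integral over $\R^2$ into upper and lower half plane contributions and changing variables $y_2 \mapsto -y_2$ in the latter yields, for $x_2 > 0$,
\begin{equation*}
\partial_2 H_\al(x_1, x_2) = \int_\hp \bigl[G_\al(x_1 - y_1, x_2 - y_2) - G_\al(x_1 - y_1, x_2 + y_2)\bigr] \frac{y_2}{2\pi (y_1^2 + y_2^2)} \, dy.
\end{equation*}

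The conclusion now rests on showing the bracket is pointwise non-negative. For $x_2, y_2 > 0$ one has $(x_2 - y_2)^2 \leq (x_2 + y_2)^2$, so $(x_1 - y_1, x_2 - y_2)$ lies at least as close to the origin as $(x_1 - y_1, x_2 + y_2)$. The main and only nontrivial input is therefore the radial monotonicity of $G_\al$: namely, that $G_\al(x)$ is a non-increasing function of $|x|$. This can be read off from the explicit formula $G_\al(x) = (2\pi\al)^{-1} K_0(|x|/\sqrt{\al})$, where $K_0$ is the modified Bessel function of the second kind, known to be positive and strictly decreasing on $(0, \infty)$; alternatively, one can argue directly from the radial ODE $\al(u'' + u'/r) - u = 0$ satisfied by $G_\al$ for $r > 0$, together with its logarithmic singularity at the origin and exponential decay at infinity. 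Once radial monotonicity is in hand, the bracket is non-negative, and since $y_2 \geq 0$ on $\hp$ the integrand is non-negative, which proves the lemma.
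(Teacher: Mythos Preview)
Your proof is correct and takes a genuinely different route from the paper's. The paper works directly with the subordination formula
\[
G_1(x)=\frac1{4\pi}\int_0^\infty \frac1t\, e^{-\pi|x|^2/t}\, e^{-t/4\pi}\,dt,
\]
differentiates $H_\al=\al G_\al+G$ in $x_2$, and shows by an explicit integral comparison (dropping the factor $e^{-t/4\pi}\le 1$) that the negative contribution from $\al\,\partial_2 G_\al$ is dominated pointwise by $x_2/(2\pi|x|^2)$.

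Your argument instead establishes the convolution identity $H_\al=G_\al\ast G$ (which the paper does not state), folds $\partial_2 H_\al=G_\al\ast\partial_2 G$ onto $\hp$ via the odd symmetry of $\partial_2 G$, and reduces the sign to the radial monotonicity of $G_\al$. This is a method-of-images viewpoint: it makes transparent \emph{why} the sign holds and would apply verbatim with $G_\al$ replaced by any radially non-increasing $L^1$ kernel. The paper's computation is more self-contained (the subordination formula simultaneously yields positivity and monotonicity of $G_\al$ without invoking Bessel-function facts), while yours is more structural and arguably more robust. Both are short; neither dominates the other.

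One minor remark: your uniqueness step for $H_\al=G_\al\ast G$ is fine, but you could also just check it on the Fourier side, since $\al/(1+\al|\xi|^2)+\widehat G=\widehat G/(1+\al|\xi|^2)$ follows from $|\xi|^2\widehat G=-1$.
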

\begin{proof}
The function $G_1$, which is a Bessel potential, satisfies:
\begin{equation*}
G_1(x)=\frac1{4\pi}\int_0^\infty \frac1t e^{-\frac{\pi|x|^2}t}e^{-\frac t{4\pi}}\,dt,
\end{equation*}
see \cite[page 132]{stein_singular_1970}.

Assume that $x_2>0$. We use  \eqref{galphag1} and \eqref{defhalpha} to deduce that
\begin{align*}
\partial_2H_\al
&=\al\partial_2G_\al+\frac{x_2}{2\pi|x|^2}\\
&=\frac1{\sqrt\al}\partial_2G_1\bigl(\frac x{\sqrt\al}\bigr)+\frac{x_2}{2\pi|x|^2}\\
&=-\frac{x_2}{2\al}\int_0^\infty \frac1{t^2} e^{-\frac{\pi|x|^2}{t\al}}e^{-\frac t{4\pi}}\,dt+\frac{x_2}{2\pi|x|^2}\\
&\geq-\frac{x_2}{2\al}\int_0^\infty \frac1{t^2} e^{-\frac{\pi|x|^2}{t\al}}\,dt+\frac{x_2}{2\pi|x|^2}\\
&=0.
\end{align*}
\end{proof}

Next, we will show that the boundary value of $\ubdry$, $g$, see Proposition \ref{prop-formula}, can be estimated in $L^1$ by the total variation of the potential vorticity.

\begin{proposition}\label{propg}
Assume that $q\in\bm(\hp)$ and let $g$ be the boundary value of $\ubdry$, as defined through \eqref{defg}. Then $g\in L^1(\R)$ and we have
\begin{equation*}
\|g\|_{L^1(\R)}\leq \|q\|_{\bm(\hp)}.
\end{equation*}
\end{proposition}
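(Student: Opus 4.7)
The plan is to exploit the non-negativity of the kernel $\partial_2 H_\al$ established in Lemma \ref{signhalpha}, combined with an exact evaluation of its integral in the $x_1$-variable via the Fourier-side formula \eqref{fhalpha}. The bound on $\|g\|_{L^1(\R)}$ should reduce, via Fubini-Tonelli, to controlling the one-variable integral $\int_\R \partial_2 H_\al(x_1, y_2)\, dx_1$ uniformly in $y_2 > 0$, so the entire argument is essentially a single explicit computation.

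First I would write, using the definition \eqref{defg} of $g$ and the non-negativity $\partial_2 H_\al \ge 0$ in $\hp$ from Lemma \ref{signhalpha},
\begin{equation*}
|g(x_1)| \le 2\int_\hp \partial_2 H_\al(x_1-y_1, y_2)\, d|q|(y),
\end{equation*}
where $|q|$ denotes the total variation measure of $q$. Then I would apply Tonelli's theorem (legitimate because the integrand is non-negative) to interchange the $x_1$-integration with the integration against $|q|$:
\begin{equation*}
\|g\|_{L^1(\R)} \le 2\int_\hp \Bigl(\int_\R \partial_2 H_\al(x_1-y_1, y_2)\, dx_1\Bigr)\, d|q|(y) = 2\int_\hp \Bigl(\int_\R \partial_2 H_\al(z, y_2)\, dz\Bigr)\, d|q|(y),
\end{equation*}
after translating the inner integral by $y_1$.

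The key step is then to compute the inner integral. Since $\int_\R \partial_2 H_\al(z, y_2)\, dz = (\F_1 \partial_2 H_\al)(0, y_2)$, I would evaluate \eqref{fhalpha} at $\xi_1 = 0$, noting that $\xi^1_\al|_{\xi_1=0} = 1/\sqrt{\al}$, to obtain
\begin{equation*}
\int_\R \partial_2 H_\al(z, y_2)\, dz = \tfrac{1}{2}\bigl(1 - e^{-y_2/\sqrt{\al}}\bigr) \le \tfrac{1}{2} \qquad \text{for all } y_2 > 0.
\end{equation*}
Plugging this back in yields $\|g\|_{L^1(\R)} \le \int_\hp d|q|(y) = \|q\|_{\bm(\hp)}$, as desired.

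The main technical point is really the non-negativity provided by Lemma \ref{signhalpha}, without which the absolute value inside the integral could not be dropped and the Fourier computation at $\xi_1 = 0$ would only give a signed integral, not an $L^1$ bound. Apart from that, the argument is a direct calculation, so I do not anticipate further obstacles; care should only be taken to justify Tonelli, which is immediate here because both $|q|$ and the kernel $\partial_2 H_\al$ are non-negative.
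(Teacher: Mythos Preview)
Your proposal is correct and follows essentially the same approach as the paper's own proof: both use the non-negativity from Lemma~\ref{signhalpha} to reduce to the inner integral $\int_\R\partial_2H_\al(x_1,y_2)\,dx_1$, evaluate it as $\F_1\partial_2H_\al(0,y_2)=\tfrac12(1-e^{-y_2/\sqrt\al})$ via \eqref{fhalpha}, and conclude by Fubini--Tonelli.
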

\begin{proof}
According to Lemma \ref{signhalpha} we know that $\partial_2H_\al(x_1-y_1,y_2)$, the kernel in \eqref{defg}, is non-negative. We take the absolute value in \eqref{defg}, we integrate and we use the Fubini theorem to deduce that
\begin{equation*}
\int_\R |g(x_1)|\,dx_1\leq2\int_\hp  \Bigl(\int_\R\partial_2H_\al(x_1,y_2)\,dx_1\Bigr)\,d|q|(y).
\end{equation*}
We find, for $y_2>0$, that:
\begin{equation*}
\int_\R\partial_2H_\al(x_1,y_2)\,dx_1
=\F_1\partial_2H_\al(0,y_2)
=\frac12-\frac1{2}e^{-\frac{y_2}{\sqrt\al}}\leq \frac12,
\end{equation*}
where we used \eqref{fhalpha}. Therefore
\begin{equation*}
\int_\R |g(x_1)|\,dx_1\leq \int_\hp 1\,d|q|(y)=\|q\|_{\bm(\hp)},
\end{equation*}
as we wished.
\end{proof}

We proceed with two technical lemmas.
\begin{lemma}\label{boundhom}
Let $\eta$ be a function homogeneous of degree $\gamma$ with $\gamma<-\frac12$ and smooth on $\R^2\setminus\{0\}$. There exists a constant $C=C(\eta)$ such that for all $x_2\neq0$ we have that
\begin{equation*}
\|\eta(\cdot,x_2)\|_{L^2(\R)}\leq C x_2^{\gamma+\frac12}.
\end{equation*}
\end{lemma}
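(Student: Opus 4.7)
The plan is to exploit the homogeneity of $\eta$ to reduce the claim to a single, finite integral that serves as the constant $C$. The scaling structure of the statement (exponent $\gamma+\frac12$, with $\frac12$ coming from the one-dimensional change of variable and $\gamma$ from the homogeneity of $\eta$) already points exactly to this strategy.

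First, I would treat the case $x_2 > 0$; the case $x_2 < 0$ follows by the symmetry $\eta(x_1,x_2) = |x_2|^\gamma \eta(x_1/|x_2|, \mathrm{sgn}(x_2))$ (homogeneity in the positive scaling factor $|x_2|$). For $x_2 > 0$, homogeneity gives
\begin{equation*}
\eta(x_1,x_2) = x_2^\gamma\, \eta\bigl(x_1/x_2,\,1\bigr).
\end{equation*}
I would then substitute $t = x_1/x_2$ in the $L^2$ norm: $dx_1 = x_2\,dt$, whence
\begin{equation*}
\|\eta(\cdot,x_2)\|_{L^2(\R)}^2 = x_2^{2\gamma}\!\int_\R \bigl|\eta(x_1/x_2,1)\bigr|^2\,dx_1 = x_2^{2\gamma+1}\!\int_\R \bigl|\eta(t,1)\bigr|^2\,dt,
\end{equation*}
so the claim reduces to showing that $C := \|\eta(\cdot,1)\|_{L^2(\R)}$ is finite.

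The finiteness of this one-dimensional integral is the only nontrivial point, and it is precisely where the hypothesis $\gamma<-\frac12$ enters. Near $t=0$, the point $(t,1)$ stays away from the origin, so $\eta(t,1)$ is bounded by smoothness of $\eta$ on $\R^2\setminus\{0\}$. For $|t|$ large, I would apply homogeneity in reverse, writing
\begin{equation*}
\eta(t,1) = |t|^\gamma\, \eta\bigl(\mathrm{sgn}(t),\,1/|t|\bigr),
\end{equation*}
and note that $(\mathrm{sgn}(t),1/|t|)$ ranges over a bounded subset of $\R^2\setminus\{0\}$ that stays at distance $\geq 1$ from the origin, so $\eta(\mathrm{sgn}(t),1/|t|)$ is uniformly bounded there. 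Consequently $|\eta(t,1)|\leq C_0|t|^\gamma$ for $|t|\geq 1$, and $\int_{|t|\geq 1} |t|^{2\gamma}\,dt <\infty$ exactly because $2\gamma<-1$. Combining the two regimes yields $C<\infty$, and the lemma follows.

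The only potential obstacle is the integrability at infinity, which is exactly matched by the hypothesis $\gamma<-\frac12$; with that hypothesis in hand, the proof is essentially a homogeneity-plus-scaling computation.
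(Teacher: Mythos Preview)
Your proof is correct and follows essentially the same scaling argument as the paper. The only cosmetic difference is the order of operations: the paper first bounds $|\eta(x)|\leq C|x|^\gamma$ pointwise (by homogeneity and smoothness on the unit circle) and then scales the resulting integral, whereas you scale first to reduce to $\|\eta(\cdot,1)\|_{L^2(\R)}$ and then use the same pointwise bound to show this is finite; both routes rely on $\int_\R (1+t^2)^\gamma\,dt<\infty$ for $\gamma<-\tfrac12$.
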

\begin{proof}
We have that $|\eta(x)|\leq C|x|^\gamma$ so
\begin{equation*}
\int_\R|\eta(x)|^2\,dx_1\leq C\int_\R|x|^{2\gamma}\,dx_1
=C\int_\R(x_1^2+x_2^2)^\gamma\,dx_1
=Cx_2^{2\gamma+1}\int_\R(t^2+1)^\gamma\,dt
\end{equation*}
where we changed variables $x_1=tx_2$.
\end{proof}
\begin{lemma}\label{lem-d2g}
There exists a universal constant $C$ such that, for all $x_2>0$, we have:
\begin{gather}
\|\partial_2 G_\al (\cdot,x_2)\|_{L^\infty(\R)}\leq \frac C{\al x_2} e^{-\frac{x_2}{\sqrt\al}}\label{d2g1},\\
\|\partial_2 G_\al (\cdot,x_2)\|_{L^2(\R)}\leq \frac C{\al \sqrt{x_2}} e^{-\frac{x_2}{\sqrt\al}}\label{d2g2},\\
\intertext{and}
\|\partial_1\partial_2 G_\al (\cdot,x_2)\|_{L^2(\R)}\leq \frac C{\al x_2\sqrt{x_2}} e^{-\frac{x_2}{\sqrt\al}}.\notag
\end{gather}
\end{lemma}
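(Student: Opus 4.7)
The plan is to reduce each norm to a one-dimensional integral on the Fourier side in the first variable. Formula \eqref{g1} gives, for $x_2>0$,
\[
\F_1\partial_2 G_\al(\xi_1,x_2) = -\frac{1}{2\al}e^{-\xial x_2},
\]
and consequently $\F_1\partial_1\partial_2 G_\al(\xi_1,x_2) = -\frac{i\xi_1}{2\al}e^{-\xial x_2}$. The $L^\infty$ estimate then follows from the elementary bound $\|f\|_{L^\infty(\R)} \le \frac{1}{2\pi}\|\F_\R f\|_{L^1(\R)}$ applied to $f=\partial_2 G_\al(\cdot,x_2)$, while the two $L^2$ estimates follow from Plancherel. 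These reductions yield
\[
\nl\infty{\partial_2 G_\al(\cdot,x_2)} \le \frac{1}{4\pi\al}\int_\R e^{-\xial x_2}\,d\xi_1,
\]
\[
\nl2{\partial_2 G_\al(\cdot,x_2)}^2 = \frac{1}{8\pi\al^2}\int_\R e^{-2\xial x_2}\,d\xi_1, \quad \nl2{\partial_1\partial_2 G_\al(\cdot,x_2)}^2 = \frac{1}{8\pi\al^2}\int_\R \xi_1^2 e^{-2\xial x_2}\,d\xi_1.
\]

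The heart of the proof is to estimate the resulting integrals so as to produce the factor $e^{-x_2/\sqrt\al}$ together with the correct algebraic powers in $x_2$. Since $\xial \ge 1/\sqrt\al$, I would factor $e^{-a\xial x_2} = e^{-ax_2/\sqrt\al}\,e^{-a(\xial - 1/\sqrt\al)x_2}$ and then use the identity
\[
\xial - \frac{1}{\sqrt\al} = \frac{\xi_1^2}{\xial + 1/\sqrt\al}
\]
to control the residual. Splitting the $\xi_1$-integration at $|\xi_1|=1/\sqrt\al$: in the range $|\xi_1|\le 1/\sqrt\al$ one has $\xial + 1/\sqrt\al \lesssim 1/\sqrt\al$, hence $\xial - 1/\sqrt\al \gtrsim \sqrt\al\,\xi_1^2$, producing a Gaussian bound on the residual; in the range $|\xi_1|>1/\sqrt\al$ one has $\xial + 1/\sqrt\al \lesssim |\xi_1|$, hence $\xial - 1/\sqrt\al \gtrsim |\xi_1|$, producing an exponential bound. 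The resulting integrals are evaluated via the standard identities $\int_\R e^{-\lambda|\xi_1|}\,d\xi_1 = 2/\lambda$, $\int_\R \xi_1^2 e^{-\lambda|\xi_1|}\,d\xi_1 = 4/\lambda^3$, and $\int_\R e^{-\lambda\xi_1^2}\,d\xi_1 = \sqrt{\pi/\lambda}$.

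The main technical obstacle is the careful bookkeeping of algebraic powers. In each of the three cases the Gaussian contribution yields fractional prefactors such as $\al^{-1/4}x_2^{-1/2}$, while the exponential contribution yields integer powers such as $x_2^{-1}$ or $x_2^{-3}$; these dominate in the respective regimes $x_2\lesssim\sqrt\al$ and $x_2\gtrsim\sqrt\al$. One has to verify that, after multiplication by the Fourier prefactor $1/\al$ (or $1/\al^2$) and after taking square roots for the $L^2$ bounds, both contributions are absorbed into the unified forms $\frac{C}{\al x_2}$, $\frac{C}{\al\sqrt{x_2}}$, and $\frac{C}{\al x_2^{3/2}}$ claimed in the lemma, uniformly in $\al>0$ and $x_2>0$. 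This regime-splitting and majorisation step is routine but the most delicate part of the computation.
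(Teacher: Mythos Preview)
Your proposal is correct and follows the same overall strategy as the paper: pass to the Fourier side in $x_1$ via \eqref{g1}, use the Hausdorff--Young bound for the $L^\infty$ norm and Plancherel for the $L^2$ norms, and split the $\xi_1$-integral at $|\xi_1|=1/\sqrt\al$.

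The only real difference lies in the handling of the low-frequency piece $|\xi_1|\le 1/\sqrt\al$. The paper uses the crudest possible bound there: since $\xial\ge 1/\sqrt\al$, one has $e^{-x_2\xial}\le e^{-x_2/\sqrt\al}$, and integrating this constant over an interval of length $2/\sqrt\al$ immediately gives the contribution $\frac{2}{\sqrt\al}e^{-x_2/\sqrt\al}$ (and analogous expressions with the weights $1$, $\xi_1^2$ in the $L^2$ cases). On the high-frequency piece the paper uses $\xial\ge|\xi_1|$ and computes the resulting exponential integrals explicitly. You instead factor out $e^{-ax_2/\sqrt\al}$ globally and then bound the residual $e^{-a(\xial-1/\sqrt\al)x_2}$ by a Gaussian on low frequencies via the identity $\xial-1/\sqrt\al=\xi_1^2/(\xial+1/\sqrt\al)$. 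This is correct but more elaborate than necessary: the Gaussian refinement buys nothing here, since the paper's cruder low-frequency bound already produces terms of the form $\al^{-3/2}e^{-x_2/\sqrt\al}$ (resp.\ $\al^{-1/2}e^{-2x_2/\sqrt\al}$), which are absorbed into the stated bounds using only the boundedness of $s\mapsto s e^{-s}$. Your regime-splitting discussion at the end is exactly this absorption step, which the paper dispatches in one line.
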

\begin{proof}
Using \eqref{g1} we obtain
\begin{align*}
\|\partial_2 G_\al (\cdot,x_2)\|_{L^\infty(\R)}
&\leq \frac1{2\pi}\|\F_1\partial_2  G_\al (\cdot,x_2)\|_{L^1(\R)}\\
&=\frac1{4\pi\al}\int_\R e^{-x_2\sqrt{\xi_1^2+\frac1\al}}\,d\xi_1\\
&=\frac1{4\pi\al}\int_{|\xi_1|<\frac1{\sqrt\al}} e^{-x_2\sqrt{\xi_1^2+\frac1\al}}\,d\xi_1+\frac1{4\pi\al}\int_{|\xi_1|>\frac1{\sqrt\al}} e^{-x_2\sqrt{\xi_1^2+\frac1\al}}\,d\xi_1\\
&\leq \frac1{4\pi\al}\int_{|\xi_1|<\frac1{\sqrt\al}} e^{-\frac{x_2}{\sqrt\al}}\,d\xi_1+\frac1{4\pi\al}\int_{|\xi_1|>\frac1{\sqrt\al}} e^{-x_2|\xi_1|}\,d\xi_1\\
&=\frac1{2\pi\al\sqrt\al}e^{-\frac{x_2}{\sqrt\al}}+\frac1{2\pi\al x_2}e^{-\frac{x_2}{\sqrt\al}}\\
&\leq \frac{C}{\al x_2}e^{-\frac{x_2}{\sqrt\al}},
\end{align*}
where we used that the function $s e^{-s}$ is bounded on $\R$. This establishes \eqref{d2g1}.

To prove \eqref{d2g2} we use the Plancherel theorem and relation \eqref{g1} to write
\begin{equation*}
\|\partial_2 G_\al (\cdot,x_2)\|_{L^2(\R)}=\frac1{\sqrt{2\pi}}\|\F_1\partial_2  G_\al (\cdot,x_2)\|_{L^2(\R)}=\frac1{2\al\sqrt{2\pi}}\|e^{-x_2\sqrt{\xi_1^2+\frac1\al}}\|_{L^2(d\xi_1)}.
\end{equation*}
Next we have:
\begin{align*}
\|e^{-x_2\sqrt{\xi_1^2+\frac1\al}}\|^2_{L^2(d\xi_1)}
&=\int_\R e^{-2x_2\sqrt{\xi_1^2+\frac1\al}}\,d\xi_1\\
&=\int_{|\xi_1|<\frac1{\sqrt\al}} e^{-2x_2\sqrt{\xi_1^2+\frac1\al}}\,d\xi_1+\int_{|\xi_1|>\frac1{\sqrt\al}} e^{-2x_2\sqrt{\xi_1^2+\frac1\al}}\,d\xi_1\\
&\leq \int_{|\xi_1|<\frac1{\sqrt\al}} e^{-\frac{2x_2}{\sqrt\al}}\,d\xi_1+\int_{|\xi_1|>\frac1{\sqrt\al}} e^{-2x_2|\xi_1|}\,d\xi_1\\
&= \frac2{\sqrt\al} e^{-\frac{2x_2}{\sqrt\al}}+\frac1{x_2} e^{-\frac{2x_2}{\sqrt\al}}\\
&\leq \frac C{x_2} e^{-\frac{2x_2}{\sqrt\al}}.
\end{align*}
This proves \eqref{d2g2}.

Similarly, the Plancherel theorem and relation \eqref{g1} imply that
\begin{equation*}
\|\partial_1\partial_2 G_\al (\cdot,x_2)\|_{L^2(\R)}=\frac1{\sqrt{2\pi}}\|\F_1\partial_1\partial_2  G_\al (\cdot,x_2)\|_{L^2(\R)}=\frac1{2\al\sqrt{2\pi}}\|\xi_1e^{-x_2\sqrt{\xi_1^2+\frac1\al}}\|_{L^2(d\xi_1)}.
\end{equation*}
Furthermore we have:
\begin{align*}
\|\xi_1e^{-x_2\sqrt{\xi_1^2+\frac1\al}}\|^2_{L^2(d\xi_1)}
&=\int_\R \xi_1^2e^{-2x_2\sqrt{\xi_1^2+\frac1\al}}\,d\xi_1\\
&=\int_{|\xi_1|<\frac1{\sqrt\al}} \xi_1^2 e^{-2x_2\sqrt{\xi_1^2+\frac1\al}}\,d\xi_1+\int_{|\xi_1|>\frac1{\sqrt\al}}  \xi_1^2 e^{-2x_2\sqrt{\xi_1^2+\frac1\al}}\,d\xi_1\\
&\leq \int_{|\xi_1|<\frac1{\sqrt\al}} \xi_1^2  e^{-\frac{2x_2}{\sqrt\al}}\,d\xi_1+\int_{|\xi_1|>\frac1{\sqrt\al}}  \xi_1^2 e^{-2x_2|\xi_1|}\,d\xi_1\\
&= \frac2{3\al\sqrt\al} e^{-\frac{2x_2}{\sqrt\al}}+e^{-\frac{2x_2}{\sqrt\al}}\bigl(\frac1{\al x_2}+\frac1{\sqrt\al x_2^2}+\frac1{2x_2^3}\bigr)\\
&\leq \frac C{x_2^3} e^{-\frac{2x_2}{\sqrt\al}}.
\end{align*}
This completes the proof.
\end{proof}

With these estimates in hand we can now establish the main result in this section.
\begin{proposition}\label{prop-ubdry}
Let $\ubdry$ be given as in Proposition \ref{prop-formula}. There exists a universal constant $C>0$ such that
\begin{equation*}
|\ubdry(x)|\leq C \|q\|_{\bm(\hp)}\al^{\frac14}x_2^{-\frac32}\qquad \forall x\in \hp.
\end{equation*}
\end{proposition}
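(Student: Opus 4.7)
The plan is to bound separately each of the three terms in the representation of $\ubdry=(\ubdry^1,\ubdry^2)$ given by Proposition \ref{prop-formula}, working with the $L^1$-norm of $g$ throughout and invoking Proposition \ref{propg} only at the end to exchange $\|g\|_{L^1(\R)}$ for $\|q\|_{\bm(\hp)}$. A recurring tool will be the elementary inequality $\sup_{s\geq 0}\sqrt{s}\,e^{-s}<\infty$, which, applied with $s=x_2/(c\sqrt\al)$ for any fixed $c>0$, yields the crucial conversion $\tfrac{1}{x_2}e^{-x_2/(c\sqrt\al)}\leq C\al^{1/4}x_2^{-3/2}$.

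For the first piece $-2\al g\astu\partial_2 G_\al$ in $\ubdry^1$, Young's inequality and the $L^\infty$-bound of Lemma \ref{lem-d2g} give, in one line, $|{-2\al g\astu\partial_2 G_\al(x)}|\leq \tfrac{C\|g\|_{L^1}}{x_2}e^{-x_2/\sqrt\al}$, which is already of the desired form after the universal conversion. For the remaining pieces of the form $g\astu\partial_2 G_\al\asth\eta_i$ (with $i=1,2$), I would split the integration in $y_2$ at $y_2=x_2/2$. On $(0,x_2/2)$ the distance $x_2-y_2\ge x_2/2$ is bounded below, so Lemma \ref{boundhom} (with $\gamma=-2$) controls $\|\eta_i(\cdot, x_2-y_2)\|_{L^2}$ by $Cx_2^{-3/2}$; Cauchy--Schwarz in $y_1$, Young's inequality $\|(g\astu\partial_2 G_\al)(\cdot, y_2)\|_{L^2}\le\|g\|_{L^1}\|\partial_2 G_\al(\cdot, y_2)\|_{L^2}$, and the $L^2$-bound of Lemma \ref{lem-d2g} then produce an integrand of the form $\tfrac{C\|g\|_{L^1}}{\al}x_2^{-3/2}e^{-y_2/\sqrt\al}/\sqrt{y_2}$, whose $y_2$-integral evaluates (via $y_2=\sqrt\al\, u$) to $C\|g\|_{L^1}\al^{-3/4}x_2^{-3/2}$; multiplying by the outer $\al$-prefactor gives exactly $C\|g\|_{L^1}\al^{1/4}x_2^{-3/2}$.

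The main obstacle lies on the complementary interval $(x_2/2, x_2)$: there the $L^2$-estimate of $\eta_i(\cdot, x_2-y_2)$ has the non-integrable-in-$y_2$ behavior $(x_2-y_2)^{-3/2}$ near $y_2=x_2$. I would resolve this by exploiting that both $\eta_i$ are horizontal divergences, $\eta_1=\partial_1\rho_1$ with $\rho_1(x_1,z)=x_1/(x_1^2+z^2)$ and $\eta_2=-\tfrac12\partial_1\rho$ (where $\rho$ is as introduced in Section \ref{sect-intbdry}), and integrating by parts in $y_1$. The boundary terms at $y_1=\pm\infty$ vanish because $g\in L^1(\R)$ and $\partial_2 G_\al(\cdot, y_2)$ decays exponentially in $x_1$, so $(g\astu\partial_2 G_\al)(\cdot, y_2)$ tends to zero at infinity. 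The derivative transfers onto the $\partial_2 G_\al$ factor, producing $g\astu\partial_1\partial_2 G_\al$, while $\eta_i$ is replaced by $E_i\in\{\rho_1,-\rho/2\}$, now homogeneous of degree $-1$, for which Lemma \ref{boundhom} gives the much milder $\|E_i(\cdot, x_2-y_2)\|_{L^2}\leq C(x_2-y_2)^{-1/2}$, which is integrable in $y_2$. On $y_2\geq x_2/2$ the third estimate in Lemma \ref{lem-d2g} bounds $\|\partial_1\partial_2 G_\al(\cdot, y_2)\|_{L^2}$ by $\tfrac{C}{\al x_2^{3/2}}e^{-x_2/(2\sqrt\al)}$, so Cauchy--Schwarz together with $\int_{x_2/2}^{x_2}(x_2-y_2)^{-1/2}dy_2=\sqrt{2x_2}$ yields a contribution bounded by $\tfrac{C\|g\|_{L^1}}{x_2}e^{-x_2/(2\sqrt\al)}$, which the universal conversion turns into $C\|g\|_{L^1}\al^{1/4}x_2^{-3/2}$. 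Summing the three contributions and applying Proposition \ref{propg} completes the proof.
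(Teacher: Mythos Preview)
Your proof is correct and follows essentially the same approach as the paper's own argument: both split the $\asth$-integral at $y_2=x_2/2$, use the $L^2$ bounds of Lemmas~\ref{boundhom} and~\ref{lem-d2g} on the near-boundary piece, and on the far piece exploit the identities $\eta_1=\partial_1\zeta$ (your $\rho_1$) and $\eta_2=-\tfrac12\partial_1\rho$ to transfer a horizontal derivative onto $\partial_2 G_\al$, invoking the third estimate of Lemma~\ref{lem-d2g}. Your explicit discussion of the vanishing boundary terms in the integration by parts is what the paper handles implicitly via the convolution-derivative identity $(\partial_1 f)\astu h = f\astu(\partial_1 h)$, and your ``universal conversion'' $\tfrac{1}{x_2}e^{-x_2/(c\sqrt\al)}\leq C\al^{1/4}x_2^{-3/2}$ is exactly the paper's use of $e^{-z/2}\leq 1/\sqrt z$.
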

\begin{proof}
We first estimate the first component of $\ubdry$, given by
\[\ubdry^1 = -2\al g\astu\partial_2G_\al+\frac{2\al}\pi g\astu\partial_2G_\al\asth\eta_1, \]
see \eqref{ubdry1}, where $\eta_1$ was defined in \eqref{eta1}.

We use Young's inequality  and \eqref{d2g1} to bound the first term:
\begin{equation}\label{ubdry3}
\|-2\al g\astu\partial_2 G_\al(\cdot,x_2)\|_{L^\infty(\R)}
\leq 2\al\|g\|_{L^1}\|\partial_2 G_\al(\cdot,x_2)\|_{L^\infty(\R)}\leq C\|g\|_{L^1}\frac{e^{-\frac{x_2}{\sqrt\al}}}{ x_2}
\end{equation}

To bound the second term in the expression for $\ubdry^1$ we note that $\eta_1(x)=\frac{x_2^2-x_1^2}{|x|^4} = \partial_1 (x_1/|x|^2) \equiv \partial_1 \zeta$. We write this term as
\begin{align*}
\frac{2\al}\pi g\astu\partial_2G_\al\asth\eta_1(x)
&=\frac{2\al}\pi\int_0^{x_2} g\astu\partial_2G_\al(\cdot,y_2)\astu\eta_1(\cdot,x_2-y_2)\,dy_2\\
&=\frac{2\al}\pi\int_0^{\frac{x_2}2} g\astu\partial_2G_\al(\cdot,y_2)\astu\eta_1(\cdot,x_2-y_2)\,dy_2\\
&\hskip 3cm +\frac{2\al}\pi\int_{\frac{x_2}2}^{x_2} g\astu\partial_2G_\al(\cdot,y_2)\astu\eta_1(\cdot,x_2-y_2)\,dy_2\\
&\equiv I_1+I_2.
\end{align*}

We use Lemmas \ref{boundhom} and \ref{lem-d2g} and the Young inequality to bound $I_1$ as follows:
\begin{align*}
\|I_1(\cdot,x_2)\|_{L^\infty(\R)}
&\leq C\al \int_0^{\frac{x_2}2} \|g\|_{L^1}\|\partial_2G_\al(\cdot,y_2)\|_{L^2(\real)}\|\eta_1(\cdot,x_2-y_2)\|_{L^2(\real)}\,dy_2\\
&\leq C\|g\|_{L^1}\int_0^{\frac{x_2}2}\frac{e^{-\frac{y_2}{\sqrt\al}}}{\sqrt y_2}(x_2-y_2)^{-\frac32}\,dy_2\\
&\leq C\|g\|_{L^1}x_2^{-\frac32}\int_0^{\frac{x_2}2}\frac{e^{-\frac{y_2}{\sqrt\al}}}{\sqrt y_2}\,dy_2\\
&= C \|g\|_{L^1}x_2^{-\frac32}\al^{\frac14}\int_0^{\frac{x_2}{2\sqrt\al}}\frac{e^{-t}}{\sqrt t}\,dt\\
&\leq C \|g\|_{L^1}x_2^{-\frac32}\al^{\frac14}.
\end{align*}

To estimate $I_2$ we write first
\begin{align*}
I_2&=\frac{2\al}\pi\int_{\frac{x_2}2}^{x_2} g\astu\partial_2G_\al(\cdot,y_2)\astu\eta_1(\cdot,x_2-y_2)\,dy_2\\
&=\frac{2\al}\pi\int_{\frac{x_2}2}^{x_2} g\astu\partial_2G_\al(\cdot,y_2)\astu\partial_1\zeta(\cdot,x_2-y_2)\,dy_2\\
&=\frac{2\al}\pi\int_{\frac{x_2}2}^{x_2} g\astu\partial_1\partial_2G_\al(\cdot,y_2)\astu\zeta(\cdot,x_2-y_2)\,dy_2.
\end{align*}
We can now bound as above with the help of  Lemmas \ref{boundhom} and \ref{lem-d2g} and the Young inequality:
\begin{align*}
\|I_2(\cdot,x_2)\|_{L^\infty(\R)}
&\leq C\al \int_{\frac{x_2}2}^{x_2} \|g\|_{L^1}\|\partial_1\partial_2G_\al(\cdot,y_2)\|_{L^2(\real)}\|\zeta(\cdot,x_2-y_2)\|_{L^2(\real)}\,dy_2\\
&\leq C\|g\|_{L^1}\int_{\frac{x_2}2}^{x_2}\frac{e^{-\frac{y_2}{\sqrt\al}}}{y_2\sqrt y_2}\frac1{\sqrt{x_2-y_2}}\,dy_2\\
&\leq C \|g\|_{L^1}x_2^{-\frac32}e^{-\frac{x_2}{2\sqrt\al}}\int_{\frac{x_2}2}^{x_2}\frac1{\sqrt{x_2-y_2}}\,dy_2\\
&\leq C \|g\|_{L^1}\frac{e^{-\frac{x_2}{2\sqrt\al}}}{x_2}.
\end{align*}

The estimates for $I_1$ and $I_2$ obtained above yield the following bound:
\begin{equation}\label{ubdry4}
|\frac{2\al}\pi g\astu\partial_2G_\al\asth\frac{x_2^2-x_1^2}{|x|^4}|
\leq C \|g\|_{L^1}\frac{\al^{\frac14}}{x_2^{\frac32}}+C \|g\|_{L^1}\frac{e^{-\frac{x_2}{2\sqrt\al}}}{x_2}.
\end{equation}

Now, by Proposition \ref{propg} we have that $\|g\|_{L^1}$ is bounded by $\|q\|_{\bm(\hp)}$. In addition, the estimate $e^{-\frac z2}\leq \frac 1{\sqrt z}$ applied for $z=\frac{x_2}{\sqrt\al}$ shows that the second term on the rhs above is bounded by the first term. Therefore, \eqref{ubdry3}, \eqref{ubdry4} yield the desired estimate for $\ubdry^1$.

Next recall that the second component of $\ubdry$ is given by
\[\ubdry^2=-\frac{4\al}\pi g\astu \partial_2G_\al\asth \eta_2,\]
where $\eta_2$ is as in \eqref{eta2}.

Noticing that $\eta_2 = \frac{x_1x_2}{|x|^4}=-\partial_1\bigl(\frac{x_2}{2|x|^2}\bigr)$  we can use the same analysis leading up to \eqref{ubdry4} to estimate $\ubdry^2$.

This concludes the proof.
\end{proof}

\section{Passing to the limit $\al\to0$}
\label{sect-passlim}

We are now ready to prove the main result of this work, the convergence, passing to subsequences as needed, of solutions of the $\al$-Euler equations in the half-plane, with no-slip boundary conditions, to solutions of the Euler equations, assuming a sign condition on the singular part of the initial (potential) vorticity. In other words, we show that, if $v$ is any weak limit, in a sense to be made precise, of solutions $u_\al$ of the $\al$-Euler equations in the half-plane, with an initial potential vorticity $q_0 \in \bigl(\bm_+(\hp)+L^1(\hp)\bigr) \cap \dot H^{-1}(\hp)$, such that $u_\al$ vanishes on $\partial \hp$, then $v$ is a weak solution of the incompressible Euler equations in $\hp$, with initial vorticity $q_0$. The proof is based on the proof of the well-known Delort Theorem, see \cite{delort_existence_1991-1}, on the existence of vortex sheet evolution for the 2D Euler equations when vorticity has a distinguished sign. From a technical point-of-view we make use of the idea of symmetrization of the kernels involved in recovering velocity from (potential) vorticity, as was done in \cite{schochet_weak_1995}. Below, the gradient is taken in the spatial variable and not in time.

Let us begin by describing a weak formulation of the $\al$-Euler equations which will be useful for our purposes. Fix $T>0$. Let $q_0\in \bm(\hp)$. Let $u_\al \in C^0_b(\R_+;H^1_w(\hp))\cap C^0(\R_+\times \hp)$ be the solution with initial potential vorticity $q_0$ obtained in Theorem \ref{theo-existence}. Recall that $q_\al \in L^\infty(\R_+;\bm(\hp))$.

Let $\varphi\in C^\infty_c([0,T)\times\hp)$ be a test function. Then $\varphi\in C^\infty_c([0,T)\times\hp^\ep)$ for some $\ep>0$, where $\hp^\ep$ denotes the half-plane $\{x_2>\ep\}$. We multiply the vorticity formulation of the $\al$-Euler equation by $\varphi$ and integrate by parts to obtain
\begin{align*}
\displaystyle{\int_0^T\int_\hp }\partial_t \varphi\,dq_\al dt+ \int_\hp \varphi(0,x)dq_0(x)
  & =  -\int_0^T\int_\hp u_\al\cdot\nabla\varphi\,dq_\al dt \\
&=-\int_0^T\int_\hp \ualint\cdot\nabla\varphi\,dq_\al dt 
- \int_0^T\int_\hp \ualbdry\cdot\nabla\varphi\,dq_\al dt
\end{align*}
where we used the decomposition $\ual=\ualint+\ualbdry$ introduced in Proposition \ref{prop-formula}. From Proposition \ref{prop-ubdry} we get that $\|\ualbdry\|_{L^\infty(\hp^\ep)}\leq C_\ep\|\qual\|_{\bm(\hp)}$, for some $C_\ep>0$ which is bounded with respect to $\al$, for $0<\al<1$. From the bound \eqref{qbound} on the $\bm(\hp)$-norm of $\qual$ we infer that
\begin{equation}\label{f2}
\Bigl|\int_0^T\int_\hp \ualbdry\cdot\nabla\varphi\,dq_\al dt\bigr|\leq C_{\ep}T\|q_0\|_{\bm(\hp)}\nl\infty{\nabla\varphi}.
\end{equation}

We write next
\begin{equation*}
\begin{aligned}
\int_0^T\int_\hp \ualint\cdot\nabla\varphi\,dq_\al dt
&=\int_0^T\int_\hp K_\al\ast\tq_\al(x)\cdot\nabla\varphi(t,x)\,dq_\al(x) dt\\
&=\int_0^T\iint_{\hp\times \R^2} K_\al(x-y)\cdot\nabla\varphi(t,x)\,dq_\al(x)\,d\tq_\al(y) dt\\
&=\int_0^T\iint_{\hp\times \hp} [K_\al(x-y)-K_\al(x-\overline{y})]\cdot\nabla\varphi(t,x)\,dq_\al(x)\,d\qual(y) dt\\
&\equiv \int_0^T\iint_{\hp\times \hp} N_\al(t,x,y)\,dq_\al(x)\,d\qual(y) dt,
\end{aligned}
\end{equation*}
where $\overline y=(y_1,-y_2)$ is the image of $y$; above we have used that $\tq_\al$ is the odd extension of $\qual$ to $\R^2$. Next we symmetrize and write
\begin{align*}
\int_0^T\int_\hp \ualint\cdot\nabla\varphi\,dq_\al dt
&=\int_0^T\iint_{\hp\times \hp} \frac{N_\al(t,x,y)+N_\al(t,y,x)}2\,dq_\al(x)\,d\qual(y) dt\\
&\equiv \int_0^T\iint_{\hp\times \hp} H_\varphi^\al(t,x,y)\,dq_\al(x)\,d\qual(y) dt.
\end{align*}
Since $K_\al$ is odd we have that
\begin{multline}\label{defsal}
H_\varphi^\al(x,y)=\frac12 K_\al(x-y)\cdot[\nabla\varphi(t,x)-\nabla\varphi(t,y)]-\frac12 K_\al(x-\overline{y})\cdot\nabla\varphi(t,x)\\ -\frac12 K_\al(y-\overline{x})\cdot\nabla\varphi(t,y).
\end{multline}

We have established that $\ual$ satisfies the following weak formulation:
\begin{multline} \label{weakalE}
  \int_0^T\int_\hp \partial_t \varphi d\qual dt + \int_0^T\iint_{\hp\times \hp} H_\varphi^\al(t,x,y)\,dq_\al(x)\,d\qual(y) dt  \\
  + \int_0^T\int_\hp \ualbdry\cdot\nabla\varphi\,dq_\al dt + \int_\hp \varphi(0,x)dq_0(x)  = 0.
\end{multline}

We now recall the weak vorticity formulation of the incompressible Euler equations in the half-plane for vortex sheet regularity. For a further discussion of weak vorticity formulations in other domains see \cite{lopes_filho_existence_2001} and \cite{iftimie_weak_2019}.

\begin{definition}\label{weakE}
  Let $\omega_0 \in \bm (\hp) \cap \dot H^{-1} (\hp)$. We say $\omega \in L^\infty(0,T;\bm(\hp)\cap \dot H^{-1}(\hp))$ is a solution of the weak vorticity formulation of the incompressible Euler equations, with initial data $\omega_0$, if, for any $\varphi \in C^\infty_c([0,T) \times \hp)$, the following identity holds true:
  \begin{equation*}
\int_0^T\int_\hp \partial_t \varphi d\omega dt + \int_0^T\iint_{\hp\times \hp} H_\varphi(t,x,y)\,d\omega(x)\,d\omega(y) dt
  +  \int_\hp \varphi(0,x)\,d\omega_0(x) dt =0.
      \end{equation*}
 The auxiliary test function $H_\varphi$ is given by
 \begin{equation*}
   H_\varphi=H_\varphi(t,x,y)\equiv \frac{K_\hp(x,y)\cdot \nabla \varphi(t,x)+K_\hp(y,x)\cdot \nabla \varphi(t,y)}{2}.
 \end{equation*}
\end{definition}

In view of the expression for $K_\hp$, see relation \eqref{khalfplane}, it follows that
\begin{multline}\label{defs}
H_\varphi(t,x,y)=\frac12 K(x-y)\cdot[\nabla\varphi(t,x)-\nabla\varphi(t,y)]-\frac12 K(x-\overline{y})\cdot\nabla\varphi(t,x) \\ -\frac12 K(y-\overline{x})\cdot\nabla\varphi(t,y).
\end{multline}

We will need some properties of the kernels $K$ and $K_\al$, which we collect in the following proposition.
\begin{proposition}\label{prop-kal}
There exists a universal constant $C>0$ such that, for all $x\in\R^2\setminus\{0\}$, we have that
\begin{equation*}
|K_\al(x)|\leq\frac C{|x|}\quad\text{and}\quad |K_\al(x)-K(x)|\leq C\frac{\sqrt\al}{|x|^2}.
\end{equation*}
In particular, for any $\theta>0$ we have that $K_\al\alto K$ uniformly in the set $|x|>\theta$.
\end{proposition}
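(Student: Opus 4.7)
The plan is to reduce both estimates to a single pointwise bound on $\nabla G_1$ using an algebraic identity and the scaling \eqref{galphag1}. From the computation preceding \eqref{rel-ubar}, we have $K_\al = K\ast G_\al = G_\al\ast \nabla^\perp G = \nabla^\perp H_\al$, and since $H_\al = \al G_\al + G$ we obtain the key identity
\begin{equation*}
  K_\al(x) - K(x) = \al\, \nabla^\perp G_\al(x).
\end{equation*}
The scaling $G_\al(x) = \al^{-1} G_1(x/\sqrt\al)$ then gives $\al\,\nabla G_\al(x) = \al^{-1/2}\,\nabla G_1(x/\sqrt\al)$, and an analogous scaling $K_\al(x) = \al^{-1/2} K_1(x/\sqrt\al)$ follows from $K$ being homogeneous of degree $-1$.

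The heart of the proof is the pointwise estimate $|\nabla G_1(y)| \leq C e^{-|y|/2}/|y|$ on $\R^2\setminus\{0\}$. I would derive this from the Bessel potential representation used in Lemma \ref{signhalpha},
\begin{equation*}
  G_1(y) = \frac1{4\pi}\int_0^\infty \frac1t\, e^{-\pi|y|^2/t}\,e^{-t/(4\pi)}\,dt,
\end{equation*}
whose gradient is $\nabla G_1(y) = -\frac{y}{2}\int_0^\infty t^{-2} e^{-\pi|y|^2/t - t/(4\pi)}\,dt$. The AM-GM inequality yields $\pi|y|^2/t + t/(4\pi)\geq |y|$; splitting off half of this term from the exponent and using the explicit identity $\int_0^\infty t^{-2} e^{-a/t}\,dt = 1/a$ with $a=\pi|y|^2/2$ produces $|\nabla G_1(y)|\leq e^{-|y|/2}/(\pi|y|)$. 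Since $r\mapsto r e^{-r/2}$ is bounded on $[0,\infty)$, multiplying by $|y|$ gives the two companion bounds
\begin{equation*}
  |\nabla G_1(y)| \leq \frac{C}{|y|} \qquad\text{and}\qquad |\nabla G_1(y)| \leq \frac{C}{|y|^2},\qquad y\neq 0.
\end{equation*}

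Combining is then routine. The second estimate of the proposition follows from
\begin{equation*}
  |K_\al(x) - K(x)| = \al^{-1/2}\bigl|\nabla G_1(x/\sqrt\al)\bigr|\leq \al^{-1/2}\cdot\frac{C\al}{|x|^2} = \frac{C\sqrt\al}{|x|^2}.
\end{equation*}
For the first, $|K_1(y)|\leq |K(y)| + |\nabla G_1(y)|\leq C/|y|$, hence $|K_\al(x)| = \al^{-1/2}|K_1(x/\sqrt\al)|\leq C/|x|$. The uniform convergence on $\{|x|>\theta\}$ follows at once from the second estimate, since $\sup_{|x|>\theta}|K_\al(x)-K(x)|\leq C\sqrt\al/\theta^2 \to 0$. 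The main obstacle is the $C/|y|^2$ bound on $\nabla G_1$; a naive $L^\infty$ estimate of the integral representation only yields $C/|y|$, so one really must exploit the exponential decay via AM-GM before trading it for polynomial decay at infinity.
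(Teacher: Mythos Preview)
Your argument is correct and proceeds along a genuinely different route from the paper's proof. The paper handles the first bound by citing \cite{bardos_global_2010} and proves the second by a Fourier-side computation: from $\F(K_\al-K)=i\al\xi^\perp/(1+\al|\xi|^2)$ it applies the elementary inequality $\||x|^2 f\|_{L^\infty}\leq (2\pi)^{-1}\|\Delta\F f\|_{L^1}$, then rescales $\xi\mapsto\xi/\sqrt\al$ to extract the factor $\sqrt\al$. Your argument stays entirely on the physical side: you recognize $K_\al-K=\al\nabla^\perp G_\al$ from the identity $K_\al=\nabla^\perp H_\al$ already established in the paper, and then reduce everything to pointwise bounds on $\nabla G_1$ obtained from the Bessel potential integral representation (the same formula used in Lemma \ref{signhalpha}) together with the AM-GM trick $\pi|y|^2/t+t/(4\pi)\geq|y|$. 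The scaling step and the final combination are unproblematic.

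What your approach buys is self-containment: you do not need the external reference for the first bound, and you reuse machinery already present in the paper rather than opening a Fourier computation. The paper's approach, by contrast, is shorter once one is willing to quote the literature, and the Fourier calculation for the second bound is essentially a two-line scaling argument that avoids having to estimate the Bessel integral carefully. Both methods yield the same constants up to harmless numerical factors, and both handle the uniform convergence on $\{|x|>\theta\}$ identically.
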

\begin{proof}
The first bound was proved in \cite[pages 703 and 715]{bardos_global_2010}. To prove the second bound we recall that
\begin{equation*}
\F(K)=\F\bigl(\frac{x^\perp}{2\pi|x|^2}\bigr)=-i\frac{\xi^\perp}{|\xi|^2}.
\end{equation*}
Then
\begin{equation*}
\F(K_\al)=\F(K\ast G_\al)=\F(K)\F(G_\al)=-i\frac{\xi^\perp}{|\xi|^2(1+\al|\xi|^2)}.
\end{equation*}

We can now estimate
\begin{multline*}
\||x|^2(K_\al-K)\|_{L^\infty(\R^2)}
\leq \frac1{2\pi}\|\F\bigl[|x|^2(K_\al-K)\bigr]\|_{L^1(\R^2)}
=\frac1{2\pi}\|\Delta\F(K_\al-K)\|_{L^1(\R^2)}\\
=\frac1{2\pi}\left\|\Delta\bigl(\frac{\al\xi^\perp}{1+\al|\xi|^2}\bigr)\right\|_{L^1(\R^2)}
=\frac{\sqrt\al}{2\pi}\left\|\Delta\bigl(\frac{\xi^\perp}{1+|\xi|^2}\bigr)\right\|_{L^1(\R^2)}.
\end{multline*}
This completes the proof of the proposition.
\end{proof}

We will also need  some special properties of the functions $H_\varphi^\al$ and $H_\varphi$. We will see that, in the analysis of convergence of the nonlinear terms in the proof of Theorem \ref{theo-convergence}, time will be treated as a parameter. Thus we omit the dependence on $t$ in both $H_\varphi^\al$ and $H_\varphi$.

\begin{lemma}\label{lem-S}
We have that
\begin{enumerate}
\item The function $H_\varphi$ is smooth on $\overline \hp\times \overline \hp\setminus\supp(\varphi)\times\supp(\varphi)$, supported in $\overline {\hp^\ep}\times \overline \hp\cup \overline \hp\times \overline {\hp^\ep}$ and vanishes on the boundary of $\overline \hp\times \overline \hp$ and at infinity. In addition,  $H_\varphi^\al$ is also supported in $\overline {\hp^\ep}\times \overline \hp\cup \overline \hp\times \overline {\hp^\ep}$.
\item The functions $H_\varphi^\al$ and $H_\varphi$ are uniformly bounded. More precisely, there exists a constant $C_\ep$ such that
\begin{equation}\label{boundS}
|H_\varphi^\al(x,y)|\leq C_\ep \|\varphi\|_{W^{2,\infty}}\quad\text{and}\quad
|H_\varphi(x,y)|\leq C_\ep \|\varphi\|_{W^{2,\infty}}.
\end{equation}
\item For all $\theta>0$ we have that $H_\varphi^\al \alto H_\varphi$ uniformly in the set $|x-y|>\theta$.
\item There exists a non-negative function $F$ continuous on $\overline \hp\times \overline \hp$, vanishing at infinity and such that $|H_\varphi^\al|\leq F$ for all $\al$ and $|H_\varphi|\leq F$.
\end{enumerate}
\end{lemma}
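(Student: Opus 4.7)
The plan is to treat the four assertions in order, relying on the explicit symmetrized expressions \eqref{defs}--\eqref{defsal} and on the kernel estimates of Proposition \ref{prop-kal}. Fix $\varphi\in C^\infty_c([0,T)\times\hp)$ and let $\ep>0$ be such that the spatial projection of $\supp\varphi$ lies in $\overline{\hp^\ep}$.

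For (a), the kernels $K(x-y)$, $K(x-\overline y)$ and $K(y-\overline x)$ are smooth off their respective singular sets, and the last two are singular only when $x_2=y_2=0$, which lies outside $\supp\varphi$; hence the two reflected terms in \eqref{defs} are smooth on $\bar\hp\times\bar\hp$. The first term is smooth off the diagonal, and at any point $(x_0,y_0)\notin\supp\varphi\times\supp\varphi$ at least one of $\nabla\varphi(x_0)$, $\nabla\varphi(y_0)$ vanishes identically in a neighborhood, which eliminates the only remaining singularity. The support statement is immediate since $\nabla\varphi(x)=\nabla\varphi(y)=0$ forces $H_\varphi=H_\varphi^\al=0$. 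Vanishing on $\partial(\bar\hp\times\bar\hp)$ for $H_\varphi$ uses $\overline x=x$ when $x_2=0$: then $\nabla\varphi(x)=0$ and $K(y-\overline x)=-K(x-y)$ combine to cancel. Decay at infinity follows from the bounded support of $\nabla\varphi$ and $|K(z)|\to 0$ as $|z|\to\infty$.

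For (b), the critical point is the first term. The bound $|\nabla\varphi(x)-\nabla\varphi(y)|\leq\|\nabla^2\varphi\|_\infty|x-y|$ combined with the pointwise estimate $|K_\al(z)|,|K(z)|\leq C/|z|$ from Proposition \ref{prop-kal} cancels the singularity, yielding a bound of order $\|\nabla^2\varphi\|_\infty$. For the reflected terms, the factor $\nabla\varphi(x)$ confines matters to $x_2\geq\ep$, so $|x-\overline y|\geq x_2\geq\ep$ and the kernel is bounded by $C/\ep$. For (c), writing $H_\varphi^\al-H_\varphi$ as the three corresponding kernel differences, Proposition \ref{prop-kal} gives $|K_\al(z)-K(z)|\leq C\sqrt\al/|z|^2$. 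On the first (singular) term this yields a bound $\leq C\sqrt\al\|\nabla^2\varphi\|_\infty/|x-y|\leq C\sqrt\al/\theta$ on $|x-y|>\theta$; on the two reflected terms it yields $\leq C\sqrt\al/\ep^2$; all vanish as $\al\to 0$ uniformly on $|x-y|>\theta$.

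The main obstacle is (d), constructing a continuous envelope that vanishes at infinity and dominates both families uniformly in $\al$. I would fix $\eta\in C^\infty_c(\hp)$ with $\eta\equiv 1$ on the spatial projection of $\supp\varphi$ and $\supp\eta\subset\overline{\hp^{\ep/2}}$, and set
\[
F(x,y)=C_\varphi\Bigl[(\eta(x)+\eta(y))\min\bigl(1,\tfrac{1}{|x-y|}\bigr)+\eta(x)\min\bigl(1,\tfrac{1}{|x-\overline y|}\bigr)+\eta(y)\min\bigl(1,\tfrac{1}{|y-\overline x|}\bigr)\Bigr].
\]
This function is continuous on $\bar\hp\times\bar\hp$ and vanishes at infinity, since in each term one factor is compactly supported in one variable while the other factor decays in the other. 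The pointwise bound $|H_\varphi^\al|,|H_\varphi|\leq F$ with $C_\varphi$ sufficiently large follows term-by-term from the two-pronged estimates already used in (a)--(b): close to the singularity ($|x-y|\leq 1$) the gradient/Taylor bound makes each summand $\leq C\|\nabla^2\varphi\|_\infty$, while away from it the $1/|\cdot|$ decay of the kernels takes over.
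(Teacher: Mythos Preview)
Your proof is correct and, for parts (a)--(c), runs along the same lines as the paper's: both use the pointwise bounds $|K_\al(z)|\le C/|z|$ and $|K_\al(z)-K(z)|\le C\sqrt\al/|z|^2$ from Proposition~\ref{prop-kal}, the Lipschitz estimate on $\nabla\varphi$ for the diagonal term, and the lower bound $|x-\overline y|\ge x_2+y_2\ge\ep$ for the reflected terms. (One small expository wrinkle in your (a): the statement ``at least one of $\nabla\varphi(x_0)$, $\nabla\varphi(y_0)$ vanishes identically in a neighborhood, which eliminates the only remaining singularity'' is only decisive because the remaining singularity sits on the diagonal, where ``at least one'' automatically means ``both''. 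Making that explicit would clarify the argument.)

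Where you genuinely diverge is in (d). The paper's construction of $F$ is more economical: having already obtained in (b) the uniform bound $|H_\varphi^\al|\le C_\ep\|\varphi\|_{W^{2,\infty}}$, and the decay bound
\[
|H_\varphi^\al(x,y)|\le C\,\frac{|\nabla\varphi(x)|+|\nabla\varphi(y)|}{|x-y|}
\]
(which follows from $|K_\al(x-y)-K_\al(x-\overline y)|\le C/|x-y|$ applied to each half of the symmetrization), the paper simply sets
\[
F(x,y)=\min\Bigl(C_\ep\|\varphi\|_{W^{2,\infty}},\ C\,\frac{|\nabla\varphi(x)|+|\nabla\varphi(y)|+1}{|x-y|}\Bigr),
\]
which is automatically continuous (as a min of a constant and a continuous function) and vanishes at infinity because the second entry does. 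Your cutoff-based envelope $F$ with three separate $\min(1,1/|\cdot|)$ factors also works, and has the mild advantage of tracking the three kernels individually; the paper's choice has the advantage of requiring no auxiliary cutoff and reusing bounds already in hand.
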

\begin{proof}
We prove first a). The function
\begin{equation*}
N(x,y)=[K(x-y)-K(x-\overline{y})]\cdot\nabla\varphi(x)
\end{equation*}
is clearly smooth on $\overline \hp\times \overline \hp\setminus\supp(\varphi)\times\supp(\varphi)$ and supported in $\overline {\hp^\ep}\times \overline \hp$ (recall that $\varphi$ is supported in $\hp^\ep$). In particular it vanishes if $x\in\partial \hp$. It also vanishes if $y\in\partial \hp$ because for such a $y$ we have that $y=\overline y$. Obviously
\begin{equation}\label{nal}
|N(x,y)|\leq C\bigl(\frac1{|x-y|}+\frac1{|x-\overline y|}\bigr)|\nabla\varphi(x)|\leq C\frac{|\nabla\varphi(x)|}{|x-y|}.
\end{equation}
We infer that $N$ vanishes at infinity. Since
\begin{equation}\label{sal}
H_\varphi(x,y)=\frac{N(x,y)+N(y,x)}2
\end{equation}
we deduce that $H_\varphi$ have all the properties listed in a). We observe in a similar manner that $H_\varphi^\al$ is  supported in $\overline {\hp^\ep}\times \overline \hp\cup \overline \hp\times \overline {\hp^\ep}$. This completes the proof of part a).

To prove part b), we recall the definition of $H_\varphi^\al$ given in \eqref{defsal} and use Proposition \ref{prop-kal} to bound
\begin{align*}
 |H_\varphi^\al(x,y)|
&\leq C\frac{|\nabla\varphi(x)-\nabla\varphi(y)|}{|x-y|}+C\frac{|\nabla\varphi(x)|}{|x-\overline y|}+C\frac{|\nabla\varphi(y)|}{|y-\overline x|}\\
&\leq C\nl\infty{\nabla^2\varphi}+C\frac{|\nabla\varphi(x)|+|\nabla\varphi(y)|}{x_2+y_2}\\
&\leq C\nl\infty{\nabla^2\varphi}+\frac C\ep \nl\infty{\nabla\varphi}\\
& \leq C_\ep \|\varphi\|_{W^{2,\infty}}
\end{align*}
where we also used that $\supp\varphi\subset \hp^\ep$. The same argument works for $H_\varphi$ so this proves b).

To prove c), we subtract \eqref{defs} from \eqref{defsal} and use the last estimate from Proposition \ref{prop-kal} to bound
\begin{align*}
|H_\varphi^\al(x,y)-H_\varphi(x,y)|
&\leq C\sqrt\al\frac{|\nabla\varphi(x)-\nabla\varphi(y)|}{|x-y|^2}+C\sqrt\al\frac{|\nabla\varphi(x)|}{|x-\overline y|^2}+C\sqrt\al\frac{|\nabla\varphi(y)|}{|y-\overline x|^2}\\
&\leq C\sqrt\al\frac{|\nabla\varphi(x)-\nabla\varphi(y)|}{|x-y|^2}+C\sqrt\al\frac{|\nabla\varphi(x)|}{\ep^2}+C\sqrt\al\frac{|\nabla\varphi(y)|}{\ep^2}\\
&\leq C\sqrt\al\frac{\nl\infty{\nabla^2\varphi}}{|x-y|}+C\sqrt\al\frac{\nl\infty{\nabla\varphi}}{\ep^2}.
\end{align*}
If we assume that $|x-y|>\theta>0$ then
\begin{equation*}
|H_\varphi^\al(x,y)-H_\varphi(x,y)|
\leq C\sqrt\al\Bigl(\frac{\nl\infty{\nabla^2\varphi}}{\theta}+\frac{\nl\infty{\nabla\varphi}}{\ep^2}\Bigr)\alto0
\end{equation*}
which shows part c).

It remains to prove d). We use \eqref{nal} and \eqref{sal} to bound
\begin{equation*}
|H_\varphi^\al(x,y)|\leq C\frac{|\nabla\varphi(x)|+|\nabla\varphi(y)|}{|x-y|}
\leq C\frac{|\nabla\varphi(x)|+|\nabla\varphi(y)|+1}{|x-y|}.
\end{equation*}
Recalling the uniform bound \eqref{boundS} one can easily check that the function
\begin{equation*}
F(x,y)=\min\Bigl(C_\ep \|\varphi\|_{W^{2,\infty}},C\frac{|\nabla\varphi(x)|+|\nabla\varphi(y)|+1}{|x-y|} \Bigr)
\end{equation*}
has all the required properties. This completes the proof of the lemma.
\end{proof}

\begin{remark}
  The properties of $H_\varphi$ above have been discussed and used in \cite{lopes_filho_existence_2001} and, in fact, they hold for more general domains. See \cite{iftimie_weak_2019} for a thorough account.
\end{remark}

We are now ready to establish our main result.

\begin{proof}[Proof of Theorem \ref{theo-convergence}]

Let us fix $q_0 \in \bigl(\bm_+(\hp)+L^1(\hp)\bigr)\cap \dot H^{-1}(\hp)$, independent of $\al$. Let $u_\al$, $q_\al$ solve the $\al$-Euler equations as given by Theorem \ref{theo-existence}. Since $q_0\in \dot{H}^{-1}(\hp)$ there exists $f_0\in L^2$ such that $\curl f_0=q_0$. We know that $u_{\al,0}=(\I+\al\A)^{-1}\PP f_0$ so, by Lemma \ref{lem-stokes}, we have that
\begin{equation*}
\nl2{u_{\al,0}}^2+\al\nl2{\nabla u_{\al,0}}^2\leq \nl2{f_0}^2.
\end{equation*}

The energy inequality \eqref{ubound} now implies that
\begin{equation*}
\nl2{u_\al(t)}^2+\al\nl2{\nabla u_\al(t)}^2\leq \nl2{f_0}^2\quad\forall t\geq0
\end{equation*}
so $u_\al$ is bounded in $L^\infty(\R_+;L^2_\sigma)$ uniformly in $\al$. Then there exists some $v\in L^\infty(\R_+;L^2_\sigma)$ and some subsequence of $u_\al$, which we do not relabel, such that
\begin{equation}\label{ualconv}
u_\al\rightharpoonup v\quad\text{in }L^\infty(\R_+;L^2_\sigma)\text{ weak}\ast
\end{equation}
as $\al\to0$.

The bound \eqref{qbound} implies that $q_\al$ is bounded in $L^\infty(\R_+;\bm(\hp))$ uniformly in $\al$. So there exists some $\om\in L^\infty(\R_+;\bm(\hp))$ and some subsequence of $q_\al$, which again we do not relabel, such that
\begin{equation}\label{qualconv}
q_\al\rightharpoonup \om\quad\text{in }L^\infty(\R_+;\bm(\hp))\text{ weak}\ast
\end{equation}
as $\al\to0$.

Taking the curl in \eqref{ualconv} implies that
\begin{equation*}
\curl u_\al\to\curl v\quad\text{in }\mathscr{D}'(\R_+^*\times \hp)
\end{equation*}
as $\al\to0$.
Moreover
\begin{equation*}
\curl\Delta u_\al\to\curl \Delta v\quad\text{in }\mathscr{D}'(\R_+^*\times \hp)
\end{equation*}
and, therefore,
\begin{equation*}
\al \curl\Delta u_\al\to0\quad\text{in }\mathscr{D}'(\R_+^*\times \hp)
\end{equation*}
as $\al\to0$. We infer that
\begin{equation*} 
q_\al=\curl u_\al - \al \curl\Delta u_\al\to\curl v\quad\text{in }\mathscr{D}'(\R_+^*\times \hp)
\end{equation*}
as $\al\to0$. Comparing this with \eqref{qualconv} and using the uniqueness of limits in the sense of the distributions we infer that $\om=\curl v$. Recalling that $v\in L^\infty(\R_+;L^2_\sigma)$ we further deduce that
\begin{equation}\label{omh-1}
\om\in L^\infty(\R_+;H^{-1}).
\end{equation}

It follows that
\begin{equation}\label{linpart}
  \int_0^T\int_\hp \partial_t \varphi d\qual dt \to \int_0^T\int_\hp \partial_t \varphi d\omega dt
\end{equation}
as $\al \to 0$. This is the only linear term in \eqref{weakalE} which we need to analyze, given that $\omega_0\equiv q_0$.

Putting together relations \eqref{f2}, \eqref{weakalE}  and \eqref{boundS} we deduce
\begin{equation*}
|\langle\partial_t\qual,\varphi\rangle_{\mathscr{D}',\mathscr{D}}|
\leq C_\ep\|\varphi\|_{W^{2,\infty}}\leq C_\ep\|\varphi\|_{H^4}.
\end{equation*}
This implies that
\begin{equation*}
\|\partial_t\qual\|_{L^\infty(0,T;H^{-4}(\hp^\ep))}\leq C_\ep,
\end{equation*}
so $\partial_t\qual$ is bounded in $L^\infty(0,T;H^{-4}_{loc})$. By the Ascoli theorem, we find, passing to  subsequences as needed, that
\begin{equation*}
\qual\to\om\quad\text{in }C^0(\R_+;H^{-5}_{loc}).
\end{equation*}

In particular, for all $t\geq0$, we have that $\qual(t)\to\om(t)$ in $H^{-5}_{loc}$. Given that $\qual(t)$ is bounded in $\bm(\hp)$, we deduce that  $\qual(t)\rightharpoonup \om(t)$ weak-$\ast$ $\bm(\hp)$.

Let us now address the nonlinear terms in \eqref{weakalE}.

We apply first  Proposition \ref{prop-ubdry} and use that $\supp\varphi\subset [0,T) \times \hp^\ep$ to bound
\begin{equation}\label{ualbdrylim}
\begin{aligned}
\bigl|\int_0^T\int_\hp \ualbdry\cdot\nabla\varphi\,dq_\al dt\bigr|
&\leq CT\nl\infty{\nabla\varphi}\|\ualbdry\|_{L^\infty((0,T)\times\hp^\ep)}\|\qual\|_{L^\infty(0,T;\bm(\hp))}\\
&\leq CT\nl\infty{\nabla\varphi}\|q_0\|_{\bm(\hp)}^2\al^{\frac14}\ep^{-\frac32}\\
&\alto0.
\end{aligned}
\end{equation}
Therefore the nonlinear term in the second line of \eqref{weakalE} vanishes as $\al\to0$.

We now pass to the limit in the nonlinear term in the first line of \eqref{weakalE}, namely
\begin{equation*}
\int_0^T\iint_{\hp\times \hp} H_\varphi^{\al}(t,x,y)\,dq_\al(x)\,d\qual(y)dt.
\end{equation*}
From Proposition \ref{prop-formula} we have that $H_\varphi^\al$ is bounded uniformly in $t$ and $\al$. Since $\qual$ is bounded with respect to $\al$ in $L^\infty(0,T;\bm(\hp))$ it follows that
\[\left|\iint_{\hp\times \hp} H_\varphi^{\al}(t,x,y)\,dq_\al(x)\,d\qual(y)\right| \leq \|H_\varphi^\al\|_{L^\infty}\|q_0\|_{\bm(\hp)}^2.\]
Hence, by the Lebesgue Dominated Convergence theorem, it suffices to pass to the limit for a fixed time $t$, hereafter omitted. From \eqref{omh-1} we know that that $\om(t)\in H^{-1}(\hp)$ a.e. in time, so we can assume in the sequel that $\om\in H^{-1}(\hp)$.

We will prove that
\begin{equation}\label{convfinal}
\iint_{\hp\times \hp} H_\varphi^{\al}(t,x,y)\,dq_\al(x)\,d\qual(y)
\alto \iint_{\hp\times \hp} H_\varphi(t,x,y)\,d\om(x)\,d\om(y)
\end{equation}

We know that  $\qual(t)\rightharpoonup \om(t)$ weak-$\ast$ $\bm(\hp)$. Since $|\qual(t)|$ is bounded in $\bm(\hp)$, it admits a sub-sequence, relabeled  $|\qual(t)|$, which converges weak-$\ast$ $\bm(\hp)$ to some measure $\mu\in\bm_+(\hp)$. This sub-sequence depends on the time $t$ and we cannot assume that we can choose the same sub-sequence for all times $t$. But since the limit we seek to find in \eqref{convfinal} does not depend on $\mu$, proving the convergence on this time-dependent sub-sequence implies the convergence for the whole sequence without extracting the time-dependent sub-sequence.

A crucial property of the measures $\om$ and $\mu$ is that they are continuous: $\om(\{x\})=\mu(\{x\})=0$ for all $x\in \hp$.
\begin{claim}\label{contmes}
The measures $\om$, $|\om|$ and $\mu$ are continuous on $\hp$.
\end{claim}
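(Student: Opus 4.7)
The plan is to transfer the non-atomic character from the $H^{-1}$ limit $\omega$ to $|\omega|$ and $\mu$ by exploiting the sign structure of $q_0$. Continuity of $\omega$ itself is immediate from \eqref{omh-1}: $\omega(t) \in H^{-1}(\hp)$ for the fixed $t$ at hand, and since in two dimensions $\delta_{x_0} \notin H^{-1}$, no $H^{-1}$-measure can carry an atom.

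For the remaining two measures, write $q_0 = q_0^\nu + q_0^f$ with $q_0^\nu \in \bm_+(\hp)$ and $q_0^f \in L^1(\hp)$. I would redo the approximation of Theorem \ref{theo-existence} using a radial, radially decreasing mollifier $\phi_n$. For $x, y \in \hp$ the reflection $\overline y$ satisfies $|x - \overline y|^2 - |x - y|^2 = 4 x_2 y_2 > 0$, so convolving the odd extension of $q_0^\nu$ with such a mollifier produces a non-negative function on $\hp$. Thus $q_0^n = q_0^{\nu,n} + q_0^{f,n}$ with $q_0^{\nu,n} \ge 0$ and $q_0^{f,n} \to q_0^f$ in $L^1(\hp)$. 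Since transport is linear and $\dive u_\alpha^n = 0$, the smooth solutions split as $q_\alpha^n(t) = q_\alpha^{\nu,n}(t) + q_\alpha^{f,n}(t)$, with $q_\alpha^{\nu,n}(t) \ge 0$ and $q_\alpha^{f,n}(t)$ sharing the distribution function of $q_0^{f,n}$; in particular the family $\{q_\alpha^{f,n}(t)\}_{\alpha,n}$ is equi-integrable.

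Passing $n \to \infty$ (weak-$\ast$ in $\bm$ for the non-negative part, Dunford--Pettis in $L^1$ for the other) and then $\al \to 0$ yields
\begin{equation*}
\omega(t) = \omega^\nu(t) + \omega^f(t), \qquad \omega^\nu(t) \ge 0, \qquad \omega^f(t) \in L^1(\hp).
\end{equation*}
Since $\omega^f \in L^1$ has no atoms, $\omega(\{x_0\}) = \omega^\nu(\{x_0\})$, and atom-freeness of $\omega$ forces that of $\omega^\nu$; hence $|\omega| \le \omega^\nu + |\omega^f|$ is non-atomic. For $\mu$, the negative part satisfies $q_\alpha(t)^- \le (q_\alpha^f(t))^-$, which is equi-integrable uniformly in $\alpha$; along a subsequence $q_\alpha(t)^- \rightharpoonup g(t)$ weakly in $L^1$, and the identity $|q_\alpha(t)| = q_\alpha(t) + 2\, q_\alpha(t)^-$ yields $\mu = \omega + 2g$, with both summands atom-free.

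The principal technical point is ensuring that $q_\alpha^{f,n}(t)$ genuinely lives in $L^1$ (not merely in $\bm$), with equi-integrability uniform in $n$ and $\alpha$; this rests on the choice of a radially decreasing mollifier (so that the odd reflection preserves the non-negative sign of $q_0^\nu$ on $\hp$) and on the volume preservation of the $\alpha$-Euler flow (so that the distribution function of the $L^1$ piece is independent of $t$ and $\alpha$).
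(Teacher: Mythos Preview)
Your argument is correct but follows a different route from the paper, and in one place does more work than needed.

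For $|\omega|$ no splitting is required: once $\omega\in H^{-1}$ is atom-free, so is $|\omega|$, since an atom of $|\omega|=\omega^++\omega^-$ would (by mutual singularity of $\omega^\pm$) sit in exactly one of them and hence be an atom of $\omega$. The paper dispatches this in one line.

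For $\mu$ the paper works directly with the weak solution $q_\alpha$ rather than returning to the approximation layer. Since $u_\alpha\in L^\infty(\R_+;H^1)$, the DiPerna--Lions theory furnishes a measure-preserving flow $X(t)$ with $q_\alpha(t)=q_0\circ X(t)$. Writing $q_0=q_1+q_2$ with $q_1\in\bm_+$ and $q_2\in L^1$, one sets $q_\alpha^j(t)=q_j\circ X(t)$; then $q_\alpha^1\ge0$ and $q_\alpha^2$, $|q_\alpha^2|$ are equi-integrable in $\alpha$ (same distribution function as $q_2$). Passing to weak-$\ast$ limits in $|q_\alpha|\le q_\alpha^1+|q_\alpha^2|$ gives $\mu\le\omega_1+\omega_3$ with $\omega_3\in L^1$ and $\omega_1=\omega-\omega_2$ continuous, hence $\mu$ continuous.

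Your route avoids DiPerna--Lions by tracking the decomposition through the smooth approximations and the double limit $n\to\infty$, $\alpha\to0$, closing with the identity $|q_\alpha|=q_\alpha+2q_\alpha^-$. This is more elementary in its prerequisites, but carries two costs. First, you require the mollifier in Theorem~\ref{theo-existence} to be radially decreasing; since uniqueness of $q_\alpha$ is not claimed, this amounts to proving the claim for a \emph{particular} construction of $q_\alpha$ rather than for the one already in hand---harmless for the overall proof of Theorem~\ref{theo-convergence}, but worth making explicit. Second, to obtain equi-integrability of $\{q_\alpha^f(t)\}_\alpha$ you must pass a uniform equi-integrability modulus through the weak $L^1$ limit in $n$, which is true but deserves a sentence of justification. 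The paper's DiPerna--Lions argument bypasses both issues by never referring to how $q_\alpha$ was constructed.
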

\begin{proof}[Proof of Claim]
We know that $\om\in H^{-1}(\hp)$ so it is a continuous measure. It follows, therefore, that $|\om|$ is also a continuous measure.

Next let us prove that $\mu$ is a continuous measure. Since $\qual$ verifies the transport equation \eqref{potvorteq}, we know from the DiPerna-Lions theory (see \cite[Theorem III.2]{diperna_ordinary_1989}) that $\qual(t)$ is  the image measure of $q_0$ by a measure preserving flow map $X(t)$: $\qual(t)=q_0\circ X(t)$.

By hypothesis $q_0\in \bm_+(\hp)+L^1(\hp)$ so there exists $q_1\in\bm_+(\hp)$ and $q_2\in L^1(\hp)$ such that $q_0=q_1+q_2$. Let $\qual^1=q_1\circ X(t)$ and  $\qual^2=q_2\circ X(t)$ so that $\qual=\qual^1+\qual^2$. Since the flow map $X(t)$ is volume preserving, the measures $\qual^1$ and $\qual^2$ are bounded in $\bm(\hp)$. Extracting sub-sequences if necessary, we can assume without loss of generality that
\begin{gather*}
\qual^1\weakalto\om_1 \quad \text{weak$\ast$ in }\bm(\hp)\\
\qual^2\weakalto\om_2 \quad \text{weak$\ast$ in }\bm(\hp)\\
\intertext{and}
|\qual^2|\weakalto\om_3 \quad \text{weak$\ast$ in }\bm(\hp).
\end{gather*}
Since $q_2\in L^1(\hp)$ and $X(t)$ is volume preserving, we observe that the sequence  $\qual^2=q_2\circ X(t)$ is equi-integrable. The Dunford-Pettis theorem implies that $\om_2\in L^1(\hp)$. Similarly  $\om_3\in L^1(\hp)$. Recall also that $q_1\in\bm_+(\hp)$ so $\qual^1\in\bm_+(\hp)$ which in turn implies that $\om_1\in\bm_+(\hp)$.

Since $\qual$ converges to $\om$ weak$\ast$ in $\bm(\hp)$ we have that $\om=\om_1+\om_2$. We know that $\om$ is a continuous measure. But $\om_2$ is also continuous as a measure because it is an $L^1$ function. We conclude that $\om_1=\om-\om_2$ is a continuous measure too.

Next we bound
\begin{equation*}
|\qual|=|\qual^1+\qual^2|\leq |\qual^1|+|\qual^2|=\qual^1+|\qual^2|.
\end{equation*}
This inequality is preserved by the weak$\ast$ limit in $\bm(\hp)$. At the limit we get that
\begin{equation*}
\mu\leq \om_1+\om_3.
\end{equation*}
Since $\om_3$ is an $L^1$ function, it defines a continuous measure. Therefore the rhs above is a continuous measure. Recalling that $\mu\geq0$ we finally deduce that $\mu$ must be a continuous measure. This completes the proof of the claim.
\end{proof}

We  continue now with the proof of the convergence stated in \eqref{convfinal}.

Let $0<\theta<\ep/4$ and consider a function $\chi\in C^\infty(\R^2\times\R^2;[0,1])$ such that $\chi_\theta(x,y)=1$ if $|x-y|<\theta/2$ and $\chi_\theta(x,y)=0$ if $|x-y|>\theta$. We decompose
\begin{equation}\label{I1234}
 \iint_{\hp\times \hp} H_\varphi^{\al}\,dq_\al(x)\,d\qual(y)-\iint_{\hp\times \hp} H_\varphi\,d\om(x)\,d\om(y)
= I_1+I_2+I_3-I_4
\end{equation}
where
\begin{align*}
I_1&=\iint_{\hp\times \hp} H_\varphi^{\al}\chi_\theta\,dq_\al(x)\,d\qual(y)\\
I_2&=\iint_{\hp\times \hp} (H_\varphi^{\al}-H_\varphi)(1-\chi_\theta)\,dq_\al(x)\,d\qual(y)\\
I_3&=\iint_{\hp\times \hp} H_\varphi(1-\chi_\theta)\,dq_\al(x)\,d\qual(y)-\iint_{\hp\times \hp} H_\varphi(1-\chi_\theta)\,d\om(x)\,d\om(y) \\
\intertext{and}
I_4&=\iint_{\hp\times \hp} H_\varphi\chi_\theta\,d\om(x)\,d\om(y)
\end{align*}

From Lemma  \ref{lem-S} c) we know that $(H_\varphi^{\al}-H_\varphi)(1-\chi_\theta)\to0$ uniformly as $\al\to0$  so
\begin{equation}\label{I2}
I_2=\iint_{\hp\times \hp} (H_\varphi^{\al}-H_\varphi)(1-\chi_\theta)\,dq_\al(x)\,d\qual(y)\alto0.
\end{equation}

From Lemma  \ref{lem-S} a) we know that $H_\varphi(1-\chi_\theta)$ is continuous on $\overline \hp\times\overline \hp$ and vanishes at the boundary and at the infinity. Since $\qual\otimes\qual\to\om\otimes\om$ in $\bm(\hp\times \hp)$ we deduce that
\begin{equation}\label{I3}
I_3=\iint_{\hp\times \hp} H_\varphi(1-\chi_\theta)\,dq_\al(x)\,d\qual(y)-\iint_{\hp\times \hp} H_\varphi(1-\chi_\theta)\,d\om(x)\,d\om(y) \alto0.
\end{equation}

Next, let $\eta_\ep\in C^0(\overline \hp;[0,1])$ be such that $\eta_\ep(x)=1$ if $x_2>\ep$ and $\eta_\ep(x)=0$ if $x_2<\ep/2$. The function $\eta_\ep(x)+\eta_\ep(y)$ is greater than 1 on $\overline {\hp^\ep}\times \overline \hp\cup \overline \hp\times \overline {\hp^\ep}$. We know from  Lemma  \ref{lem-S} a) that $H_\varphi^\al$ is supported in  $\overline {\hp^\ep}\times \overline \hp\cup \overline \hp\times \overline {\hp^\ep}$, so $|H_\varphi^\al|\leq(\eta_\ep(x)+\eta_\ep(y))|H_\varphi^\al|$. Using Lemma  \ref{lem-S} d) we can further bound
\begin{equation*}
|H_\varphi^\al|\leq (\eta_\ep(x)+\eta_\ep(y))F.
\end{equation*}
This allows to estimate the term $I_1$ as follows:
\begin{equation*}
|I_1|=\bigl|\iint_{\hp\times \hp} H_\varphi^{\al}\chi_\theta\,dq_\al(x)\,d\qual(y)\bigr|\leq \iint_{\hp\times \hp} (\eta_\ep(x)+\eta_\ep(y))F\chi_\theta\,d|q_\al|(x)\,d|\qual|(y).
\end{equation*}

From the localization properties of the supports of $\eta_\ep$ and $\chi_\theta$ we observe that the function $(\eta_\ep(x)+\eta_\ep(y))F\chi_\theta$ vanishes at the boundary of $\hp\times \hp$ (even in a neighborhood of this boundary). It also vanishes at infinity because $F$ vanishes at infinity. Using  that  $|\qual|\otimes|\qual|\to\mu\otimes\mu$ in $\bm(\hp\times \hp)$ we deduce that
\begin{equation*}
\iint_{\hp\times \hp} (\eta_\ep(x)+\eta_\ep(y))F\chi_\theta\,d|\qual|(x)\,d|\qual|(y)\alto \iint_{\hp\times \hp} (\eta_\ep(x)+\eta_\ep(y))F\chi_\theta\,d\mu(x)\,d\mu(y)
\end{equation*}
so
\begin{equation}\label{I1}
\limsup_{\al\to0}|I_1|\leq \iint_{\hp\times \hp} (\eta_\ep(x)+\eta_\ep(y))F\chi_\theta\,d\mu(x)\,d\mu(y).
\end{equation}
Similarly
\begin{equation}\label{I4}
|I_4|=\bigl|\iint_{\hp\times \hp} H_\varphi\chi_\theta\,d\om(x)\,d\om(y)\bigr|
\leq \iint_{\hp\times \hp} (\eta_\ep(x)+\eta_\ep(y))F\chi_\theta\,d|\om|(x)\,d|\om|(y).
\end{equation}

We deduce from \eqref{I1234}, \eqref{I2}, \eqref{I3}, \eqref{I1} and \eqref{I4} that
\begin{multline}\label{limsup}
\limsup_{\al\to0}\bigl|\iint_{\hp\times \hp} H_\varphi^{\al}\,dq_\al(x)\,d\qual(y)-\iint_{\hp\times \hp} H_\varphi\,d\om(x)\,d\om(y)\bigr|\\
\leq \iint_{\hp\times \hp} (\eta_\ep(x)+\eta_\ep(y))F\chi_\theta\,d\mu(x)\,d\mu(y)+ \iint_{\hp\times \hp} (\eta_\ep(x)+\eta_\ep(y))F\chi_\theta\,d|\om|(x)\,d|\om|(y).
\end{multline}

We let now $\theta\to0$. We clearly have that $\chi_\theta$ converges pointwise to the characteristic function of the diagonal, denoted by $\mathbbm{1}_{x=y}$, and is uniformly bounded. By the Lebesgue dominated convergence theorem we infer that
\begin{align*}
\lim_{\theta\to0}\iint_{\hp\times \hp} (\eta_\ep(x)+\eta_\ep(y))F\chi_\theta\,d\mu(x)\,d\mu(y)
&=\iint_{\hp\times \hp} (\eta_\ep(x)+\eta_\ep(y))F(x,y)\mathbbm{1}_{x=y}\,d\mu(x)\,d\mu(y)\\
&=2\int_\hp \eta_\ep(x)F(x,x)\mu(\{x\})\,d\mu(x)\\
&=0,
\end{align*}
where we  used the Fubini theorem and Claim \ref{contmes}. We can show in the same manner that
\begin{equation*}
\lim_{\theta\to0}\iint_{\hp\times \hp} (\eta_\ep(x)+\eta_\ep(y))F\chi_\theta\,d|\om|(x)\,d|\om|(y)=0
\end{equation*}
so taking the limit $\theta\to0$ in \eqref{limsup}  implies that
\begin{equation*}
\limsup_{\al\to0}\bigl|\iint_{\hp\times \hp} H_\varphi^{\al}\,dq_\al(x)\,d\qual(y)-\iint_{\hp\times \hp} H_\varphi\,d\om(x)\,d\om(y)\bigr|=0.
\end{equation*}

This shows the convergence stated in \eqref{convfinal}. Putting together \eqref{linpart}, \eqref{ualbdrylim} and \eqref{convfinal} we obtain that the limit $\al\to0$ in \eqref{weakalE} satisfies
\begin{equation*}
\int_0^T \int_\hp \partial_t\varphi\,d\om dt + \int_0^T \iint_{\hp\times \hp} H_\varphi\,d\om(x)\,d\om(y) dt + \int_\hp \varphi(0,\cdot)dq_0 = 0.
\end{equation*}
Therefore $\omega$ satisfies the weak vorticity formulation in Definition \ref{weakE} with initial vorticity $q_0$. This concludes the proof of Theorem \ref{theo-convergence}.
\end{proof}

\section{Comments and conclusions}

We have shown that solutions of the $\alpha$-Euler equations on the half-plane, under no-slip boundary conditions, converge in the vanishing $\alpha$ limit to a weak solution of the incompressible 2D Euler equations when the initial potential vorticity is independent of $\alpha$ and a bounded Radon measure of distinguished sign in $\dot{H}^{-1}$. Several comments are in order.

First, we emphasize that, for the weak solution of the 2D Euler equations which we are producing through the vanishing $\alpha$ limit, the test functions are supported {\it in the interior of} $\hp$. This is in contrast to the weak solutions obtained by two of the authors in \cite{lopes_filho_existence_2001}, for which the test functions merely {\it vanished} at the boundary of the half-plane, but their normal derivatives were not necessarily zero. Those weak solutions were called {\it boundary coupled weak solutions} and it was shown in \cite{lopes_filho_existence_2001}, see Theorem 2, that the method of images is valid for vortex sheet weak solutions if and only if they are boundary coupled. Thus the weak solutions discussed here may not give rise to a weak solution in the full plane through the method of images. This issue is under further investigation by the authors.

Second, we comment on a significant technical difference in the proof of convergence, with respect to the proof of the Delort theorem, namely that the potential vorticities $q^\al$ are not a priori bounded in $L^\infty(\real_+;H^{-1}(\hp))$. In the case of the 2D Euler equations the approximate vorticities obeyed this bound and this led to an a priori estimate on the mass of small balls: $\displaystyle{\int_{B(x;r)} \omega^n(t,\cdot)\,dy \leq C|\log r|^{-1/2}}$ which, in turn, implied no Dirac masses in the limit. In our vanishing $\alpha$ limit we use, instead, that $q^\al \rightharpoonup \omega$, $u^\al \rightharpoonup u$, so that, by linearity, $\omega = \curl u$. Since $u \in L^2$ we find $\omega \in H^{-1}$ and, thus, no Dirac masses.

In this work we have discussed only the case of flow in the half-plane. It would be interesting to study the $\alpha \to 0$ limit in a smooth, bounded domain, with vortex sheet initial data, thereby complementing and extending the results in \cite{lopes_filho_convergence_2015,busuioc_weak_2017} to less regular initial data.

\vspace{.5cm}

\scriptsize{
\textbf{Acknowledgments.}  A.V. Busuioc and D. Iftimie thank the Franco-Brazilian Network in Mathematics (RFBM) and  H.J. Nussenzveig Lopes thanks the PICS \#288801 (PICS08111)  of the CNRS, for their financial support for the scientific visits which led to this work. M.C. Lopes Filho acknowledges the support of Conselho Nacional de Desenvolvimento Científico e Tecnol\'ogico -- CNPq through grant \# 310441/2018-8 and of FAPERJ through grant \# E-26/202.999/2017 . H.J. Nussenzveig Lopes thanks the support of Conselho Nacional de Desenvolvimento Cient\'{\i}fico e Tecnol\'ogico -- CNPq through grant \# 309648/2018-1  and of FAPERJ through grant \# E-26/202.897/2018.
D.I.  has been partially funded by the  LABEX MILYON (ANR-10-LABX-0070) of Universit\'e de Lyon, within the program ``Investissements d'Avenir" (ANR-11-IDEX-0007) operated by the French National Research Agency (ANR). A.V. Busuioc and D. Iftimie thank UFRJ for their hospitality, while H. J. Nussenzveig Lopes thanks the Universit\'e de Lyon, where part of this work was completed.
}

\bigskip

\begin{description}
\item[Adriana Valentina Busuioc] Université de Lyon, Université de Saint-Etienne  --
CNRS UMR 5208 Institut Camille Jordan --
Faculté des Sciences --
23 rue Docteur Paul Michelon --
42023 Saint-Etienne Cedex 2, France.\\
Email: \texttt{valentina.busuioc@univ-st-etienne.fr}
\item[Dragoş Iftimie] Université de Lyon, CNRS, Université Lyon 1, Institut Camille Jordan, 43 bd. du 11 novembre, Villeurbanne Cedex F-69622, France.\\
Email: \texttt{iftimie@math.univ-lyon1.fr}\\
Web page: \texttt{http://math.univ-lyon1.fr/\~{}iftimie}
\item[Milton C. Lopes Filho] Instituto de Matem\'atica,
Universidade Federal do Rio de Janeiro,
Cidade Universit\'aria -- Ilha do Fund\~ao,
Caixa Postal 68530,
21941-909 Rio de Janeiro, RJ -- Brasil. \\
Email: \texttt{mlopes@im.ufrj.br} \\
Web page: \texttt{http://www.im.ufrj.br/mlopes}
\item[Helena J. Nussenzveig Lopes] Instituto de Matem\'atica,
Universidade Federal do Rio de Janeiro,
Cidade Universit\'aria -- Ilha do Fund\~ao,
Caixa Postal 68530,
21941-909 Rio de Janeiro, RJ -- Brasil. \\
Email: \texttt{hlopes@im.ufrj.br}\\
Web page: \texttt{http://www.im.ufrj.br/hlopes}
\end{description}

\end{document}